\newtheorem{theorem}{\bf Theorem}
\newtheorem{corollary}{\bf Corollary}
\newtheorem{problem}{\bf Problem}
\newtheorem{remark}{Remark}
\newtheorem{definition}{Definition}
\newtheorem{proposition}{Proposition}
\def\qed{\hfill $\Box$}
\def\etal{\mbox{et al.}}
\begin{document}
%
\title{Relating maximum entropy, resilient behavior and game-theoretic equilibrium feedback operators in multi-channel systems}

\author{Getachew~K.~Befekadu~\IEEEmembership{}
and~Panos~J.~Antsaklis
\thanks{This work was supported in part by the National Science Foundation under Grant No. CNS-1035655. The first author acknowledges support from the College of Engineering, University of Notre Dame.}
\IEEEcompsocitemizethanks{\IEEEcompsocthanksitem G. K. Befekadu is with the Department
of Electrical Engineering, University of Notre Dame, Notre Dame, IN 46556, USA.\protect\\
E-mail: gbefekadu1@nd.edu
\IEEEcompsocthanksitem P. J. Antsaklis is with the Department
of Electrical Engineering, University of Notre Dame, Notre Dame, IN 46556, USA.\protect\\
E-mail: antsaklis.1@nd.edu\protect\\
Version - December 19, 2013.}}

\markboth{}%
{Shell \MakeLowercase{\textit{et al.}}: Bare Advanced Demo of IEEEtran.cls for Journals}
\IEEEcompsoctitleabstractindextext{%
\begin{abstract}
In this paper, we first draw a connection between the existence of a stationary density function (which corresponds to an equilibrium state in the sense of statistical mechanics) and a set of feedback operators in a multi-channel system that strategically interacts in a game-theoretic framework. In particular, we show that there exists a set of (game-theoretic) equilibrium feedback operators such that the composition of the multi-channel system with this set of equilibrium feedback operators, when described by density functions, will evolve towards an equilibrium state in such a way that the entropy of the whole system is maximized. As a result of this, we are led to study, by a means of a stationary density function (i.e., a common fixed-point) for a family of Frobenius-Perron operators, how the dynamics of the system together with the equilibrium feedback operators determine the evolution of the density functions, and how this information translates into the maximum entropy behavior of the system. Later, we use such results to examine the resilient behavior of this set of equilibrium feedback operators, when there is a small random perturbation in the system.
\end{abstract}

\begin{IEEEkeywords}
Equilibrium feedback operators; Frobenius-Perron operators; game theory; maximum entropy; multi-channel system; resilient behavior; small random perturbation.
\end{IEEEkeywords}}

\maketitle

\IEEEdisplaynotcompsoctitleabstractindextext

%
\IEEEpeerreviewmaketitle

\section{Introduction} \label{S1}
The main purpose of this paper is to draw a connection between the existence of a stationary density (that corresponds to an equilibrium state in the sense of statistical mechanics) and a set of feedback operators in a multi-channel system that interacts strategically in a game-theoretic framework. We first specify a game in a strategic form over an infinite-horizon -- where, in the course of the game, each feedback operator generates automatically a feedback control in response to the action of other feedback operators through the system (i.e., using the current state-information of the system) and, similarly, any number of feedback operators can decide on to play their feedback strategies simultaneously. However, each of these feedback operators are expected to respond in some sense of best-response correspondence to the strategies of the other feedback operators in the system. In such a scenario, it is well known that the notion of (game-theoretic) equilibrium will offer a suitable framework to study or characterize the robust property of all equilibrium solutions under a family of information structures -- since no one can improve his payoff by deviating unilaterally from this strategy once the equilibrium strategy is attained (e.g., see \cite{Aub93}, \cite{Nas51} or \cite{Ros65} on the notions of optimums and strategic equilibria in games).\footnote{In this paper, we consider this set of feedback operators as noncooperative agents (or players), but fully-rational entities, {\em over an infinite-horizon}, in a game-theoretic sense. Further, {\em at each instant-time}, each feedback operator knows that the others will look for feedback strategies, but they are not necessarily informed about each others strategies.}

In view of the above arguments, we present in this paper an extension of game-theoretic formalism for multi-channel systems that tend to move towards an equilibrium or ``maximum entropy" state in the sense of statistical mechanics (e.g., see Lanford \cite[pp\,77--95]{Lan73} or Ruelle \cite{Rue78}) -- when the criterion is to minimize the relative entropy between any two density functions, for large-time, with respect to the control channels or the class of admissible control functions (i.e., the set of feedback operators). This further allows us to establish a connection between the existence of a stationary density function (which corresponds to a unique equilibrium state) and a set of feedback operators that strategically interacts in the system. 

Moreover, based on a common fixed-point for a family of Frobenius-Perron operators, we provide a sufficient condition on the existence of a set of (game-theoretic) equilibrium feedback operators such that when the composition of the multi-channel system with this set of equilibrium feedback operators, described by density functions, evolves towards an equilibrium state in such a way that the entropy of the whole system is maximized. As a result of this, we are led to study, how the dynamics of the system together with these equilibrium feedback operators determine the evolution of the density functions, and how this information translates into the maximum entropy behavior of the system. Later, we use such results to examine the resilient behavior of this set of equilibrium feedback operators, when there is a random perturbation in the system. We, in particular, establish sufficient conditions, based on the convergence of invariant measures (i.e., stochastic stability -- in the sense of deterministic limit (e.g., see \cite{You86}, \cite{BalYo93}, \cite{Kif88} or \cite{FreWe12} for related discussions), that will guarantee the resilient behavior for the set of equilibrium feedback operators with respect to random perturbations in the system. 

Here, we hasten to add that such a study, which involves evidence of systems exhibiting resilient behavior, would undoubtedly provide a better understanding of reliability or prescribing an optimal (sub-optimal) degree of redundancy in decentralized control systems. Finally, in the information theoretic-games, we also note that the notions of entropy, game-theoretical equilibrium and complexity, based on the {\em Maximum Entropy Principle} (MaxEnt) of Jaynes \cite{Jay57a}, \cite{Jay57b} and the {\em $I$-\,divergence metric} of Csisz\'{a}r \cite{Csi67}, \cite{Csi75}, have been investigated in the context of zero-sum games by Tops\o e (e.g., see \cite{Top93} or \cite{Top04}), Gr\"{u}nwald and Dawid \cite{GruDa04} and, similarly, by Haussler \cite{Hau97}. Moreover, we observe that the notion of entropy (and its variants) has been well discussed in systems theory literature in the context of robustness analysis and/or synthesis for systems with uncertainties (e.g., see \cite{PetJaDu00} and \cite{ChaKyRe06}).

The remainder of the paper is organized as follows. In Section~\ref{S2}, we recall the necessary background and present some preliminary results that are relevant to our paper. Section~\ref{S3} introduces a family of mappings for multi-channel systems that will be used for our main results. In Section~\ref{S4} we present our main results -- where we establish a three way connection between the existence of an equilibrium state (i.e., the maximum entropy in the sense of statistical mechanics), a common stationary density function for the family of Frobenius-Perron operators, and a set of (game-theoretic) equilibrium feedback operators. This section also discusses an extension of the resilient behavior (to these equilibrium feedback operators), when there is a small random perturbation in the system.

\section{Background, Definitions, and Notations} \label{S2}

In the following, we provide the necessary background and recall some known results from measure theory that will be useful in the sequel. The results are standard (and will be stated without proof); and they can be found in standard graduate books (e.g., see \cite{Hal74}, \cite{GikSko75} and \cite{Yos95} on the measure theory; and see also \cite{LasMac94} or \cite{Kif88} on the stochastic aspects of dynamical systems).
\begin{definition} \label{DFN1}
Let $(X, \mathscr{A}, \mu)$ be a measure space and $L^1(X, \mathscr{A}, \mu)$ be the space of all possible real-valued measurable functions $\vartheta\colon X \rightarrow \mathbb{R}$ satisfying
\begin{align}
 \int_{X} \vert \vartheta(x)\vert \mu(dx) < \infty. \label{EQ1}
\end{align}
If $S\colon X \rightarrow X$ is a measurable nonsingular transformation, i.e., $\mu(S^{-1}(A))=0$ for all $A \in \mathscr{A}$ such that $\mu(A)=0$, then the operator $P \colon L^1(X, \mathscr{A}, \mu) \rightarrow L^1(X, \mathscr{A}, \mu)$ defined by
\begin{align}
 \int_{A} P \vartheta(x) \mu(dx) = \int_{S^{-1}(A)} \vartheta(x) \mu(dx), ~~ \forall A \in \mathscr{A}, \label{EQ2}
 \end{align}
is called the Frobenius-Perron operator with respect to $S$.
\end{definition}

\begin{definition}\label{DFN2}
Let $(X, \mathscr{A}, \mu)$ be a measure space. Define
\begin{align}
 D(X, \mathscr{A}, \mu) = \Bigl \{ \vartheta(x) \in L^1(X, \mathscr{A}, \mu) \, \Bigl \vert \,\vartheta(x) \ge 0  ~~ \text{and} ~~ \bigl\lVert \vartheta(x) \bigr\rVert_{L^1(X, \mathscr{A}, \mu)}=1 \Bigr\}. \label{EQ3}
\end{align}
Then, any continuous function $\vartheta(x) \in D(X, \mathscr{A}, \mu)$ is called a density function.
\end{definition}

\begin{definition} \label{DFN3}
Let $(X, \mathscr{A}, \mu)$ be a measure space. If $S\colon X \rightarrow X$ is a nonsingular transformation and $\zeta(x) \in L^{\infty}(X, \mathscr{A}, \mu)$. Then, the operator $U \colon L^{\infty}(X, \mathscr{A}, \mu) \rightarrow L^{\infty}(X, \mathscr{A}, \mu)$ defined by
\begin{align}
 U \zeta(x) = \zeta\bigl(S(x)\bigr), \label{EQ4}
\end{align}
is called the Koopman operator with respect to $S$.
\end{definition}
Note that for every $\zeta(x) \in L^{\infty}(X, \mathscr{A}, \mu)$
\begin{align}
 \bigl \lVert U \zeta(x) \bigr \lVert_{L^{\infty}(X, \mathscr{A}, \mu)} \le \bigl \lVert \zeta(x) \bigr \lVert_{L^{\infty}(X, \mathscr{A}, \mu)}. \label{EQ5}
\end{align}
Moreover, for every $\vartheta(x) \in L^1(X, \mathscr{A}, \mu)$ and $\zeta(x) \in L^{\infty}(X, \mathscr{A}, \mu)$, then we have
\begin{align}
\langle P \vartheta,\, \zeta \rangle = \langle \vartheta,\, U \zeta \rangle, \label{EQ6} 
\end{align}
so that the operator $U$ is an adjoint to the Frobenius-Perron operator $P$.\footnote{$\langle P \vartheta,\, \zeta \rangle \triangleq \int_{X} P\vartheta(x)\zeta(x) \mu(dx)$.}

\begin{remark}
The transformation $S$ is said to be measure preserving if $\mu\bigl(S^{-1}(A)\bigr)=\mu\bigl(A\bigr)$ for all $A \in \mathscr{A}$. Note that the property of measure preserving depends both on $S$ and $\mu$.
\end{remark}

\begin{definition} \label{DFN4}
Let $\vartheta(x) \in L^1(X, \mathscr{A}, \mu)$ and $\vartheta(x) \ge 0$. If the measure
\begin{align}
\mu_{\vartheta}(A) = \int_{A} \vartheta(x) \mu\bigr(dx\bigl), \label{EQ7}  
\end{align}
is absolutely continuous with respect to the measure $\mu$, then $\vartheta(x)$ is called the Radon-Nikodym derivative of $\mu_{\vartheta}$ with respect to $\mu$.
\end{definition}

\begin{theorem} \label{TH1}
Let $(X, \mathscr{A}, \mu)$ be a measure space, $S\colon X \rightarrow X$ be a nonsingular transformation, and let $P$ be the Frobenius-Perron operator with respect to $S$. Consider a nonnegative function $\vartheta(x) \in L^1(X, \mathscr{A}, \mu)$, i.e., $\vartheta(x) > 0$, $\forall x \in X$. Then, a measure $\mu_{\vartheta}$ given by
\begin{align}
 \mu_{\vartheta}(A) = \int_{A} \vartheta(x) \mu(dx), \quad \forall A \in \mathscr{A}, \label{EQ8}
\end{align}
is invariant, if and only if, $\vartheta(x)$ is a stationary density function (i.e., a fixed-point) of $P$.
\end{theorem}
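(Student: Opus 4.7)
The plan is to reduce the invariance condition $\mu_{\vartheta}(S^{-1}(A)) = \mu_{\vartheta}(A)$ for all $A \in \mathscr{A}$ to a pointwise (a.e.) identity for $\vartheta$ via the defining property of the Frobenius--Perron operator. The argument is very short and essentially unfolds the definitions in the right order; the only genuine step is a standard measure-theoretic uniqueness lemma.

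First, I would write the invariance of $\mu_{\vartheta}$ in its definitional form, namely
\begin{align}
\mu_{\vartheta}\bigl(S^{-1}(A)\bigr) = \mu_{\vartheta}(A), \quad \forall A \in \mathscr{A}. \notag
\end{align}
Using \eqref{EQ8}, the right-hand side equals $\int_{A} \vartheta(x)\mu(dx)$. For the left-hand side, I would appeal directly to the defining equation \eqref{EQ2} of the Frobenius--Perron operator $P$ associated with $S$, which yields
\begin{align}
\mu_{\vartheta}\bigl(S^{-1}(A)\bigr) = \int_{S^{-1}(A)} \vartheta(x) \mu(dx) = \int_{A} P\vartheta(x) \mu(dx). \notag
\end{align}
Thus invariance of $\mu_{\vartheta}$ is equivalent to the integral identity
\begin{align}
\int_{A} P\vartheta(x)\, \mu(dx) = \int_{A} \vartheta(x)\, \mu(dx), \quad \forall A \in \mathscr{A}. \notag
\end{align}

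Second, I would invoke the standard fact from measure theory that if two functions in $L^{1}(X, \mathscr{A}, \mu)$ agree in integral on every measurable set, then they coincide $\mu$-almost everywhere (a direct consequence of the uniqueness part of the Radon--Nikodym theorem, cf.\ Definition~\ref{DFN4}). Applied to $P\vartheta$ and $\vartheta$, this gives $P\vartheta(x) = \vartheta(x)$ for $\mu$-a.e.\ $x \in X$, i.e., $\vartheta$ is a fixed-point of $P$. Since by hypothesis $\vartheta(x) > 0$ and $\vartheta \in L^{1}(X, \mathscr{A}, \mu)$, after normalization it belongs to $D(X, \mathscr{A}, \mu)$ of Definition~\ref{DFN2}, and thus qualifies as a stationary density. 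Each implication in this chain is reversible, establishing the ``if and only if.''

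The principal difficulty, such as it is, is bookkeeping: being careful that the equivalence between the set-integral identity and the a.e.\ equality is cited at the right level of generality, and that the normalization of $\vartheta$ is consistent with the notion of a density function used elsewhere in the paper. There is no nontrivial analytic obstruction; the nonsingularity of $S$ is precisely what is needed so that $P$ is well defined on $L^{1}(X, \mathscr{A}, \mu)$, and positivity of $\vartheta$ is used only to ensure the measure $\mu_{\vartheta}$ is nontrivial and that $\vartheta$ qualifies as a density.
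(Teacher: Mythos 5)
Your argument is correct, and it is the standard textbook proof (it is essentially Theorem 4.1.1 of Lasota and Mackey \cite{LasMac94}, which the paper cites): unfold invariance as $\mu_{\vartheta}(S^{-1}(A))=\mu_{\vartheta}(A)$, rewrite the left side via the defining relation \eqref{EQ2} of $P$ to get $\int_{A}P\vartheta\,d\mu=\int_{A}\vartheta\,d\mu$ for all $A\in\mathscr{A}$, and conclude $P\vartheta=\vartheta$ a.e.\ by the usual lemma that two $L^{1}$ functions with equal integrals over every measurable set coincide a.e.\ (which for $L^{1}$ functions follows directly by testing on $A=\{P\vartheta>\vartheta\}$ and $A=\{P\vartheta<\vartheta\}$, with no appeal to Radon--Nikodym needed). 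Note that the paper itself states Theorem~\ref{TH1} without proof, explicitly deferring to standard references, so there is no in-paper argument to compare against; your proof fills that gap in exactly the expected way, and your remark about normalization is the right way to reconcile ``fixed point of $P$'' with ``stationary density'' in the sense of Definition~\ref{DFN2}.
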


\begin{theorem} \label{TH2}
Let $(X, \mathscr{A}, \mu)$ be a measure space and $S\colon X \rightarrow X$ be a nonsingular transformation. $S$ is ergodic, if and only if, for every measurable function $\vartheta \colon X \rightarrow \mathbb{R}$
\begin{align}
 \vartheta(S(x)) = \vartheta(x), \label{EQ9}
\end{align}
for almost all $x \in X$, implies that $\vartheta(x)$ is constant almost everywhere.
\end{theorem}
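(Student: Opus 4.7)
The statement is the standard characterization of ergodicity through invariant functions, and the natural plan is to prove the two implications separately by passing between invariant sets and invariant functions via indicator functions on one side and level sets on the other.

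For the easier direction ($\Leftarrow$), I would assume that every measurable $\vartheta$ with $\vartheta\circ S=\vartheta$ a.e.\ is constant a.e., and let $A\in\mathscr{A}$ be an invariant set in the sense that $S^{-1}(A)=A$ (up to a null set). Then the indicator function $\vartheta(x)=\mathbf{1}_A(x)$ is measurable, and
\begin{align*}
\vartheta(S(x))=\mathbf{1}_A(S(x))=\mathbf{1}_{S^{-1}(A)}(x)=\mathbf{1}_A(x)=\vartheta(x)
\end{align*}
for almost every $x\in X$. By hypothesis, $\mathbf{1}_A$ is constant a.e., hence equals $0$ a.e.\ or $1$ a.e., which means $\mu(A)=0$ or $\mu(X\setminus A)=0$. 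This establishes ergodicity.

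For the harder direction ($\Rightarrow$), I would assume $S$ is ergodic and that $\vartheta(S(x))=\vartheta(x)$ for almost all $x\in X$. The plan is to examine the sublevel sets $A_c=\{x\in X\colon \vartheta(x)\le c\}$ for $c\in\mathbb{R}$. Because $\vartheta$ is measurable, each $A_c$ lies in $\mathscr{A}$, and the identity $\vartheta\circ S=\vartheta$ a.e.\ gives
\begin{align*}
S^{-1}(A_c)=\bigl\{x\colon \vartheta(S(x))\le c\bigr\}=\bigl\{x\colon \vartheta(x)\le c\bigr\}=A_c
\end{align*}
modulo a $\mu$-null set. Ergodicity of $S$ then forces $\mu(A_c)\in\{0,\mu(X)\}$ for every $c$ (in the $\sigma$-finite case this dichotomy is phrased as $\mu(A_c)=0$ or $\mu(X\setminus A_c)=0$). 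Setting $c^{\ast}=\inf\{c\in\mathbb{R}\colon \mu(X\setminus A_c)=0\}$, monotonicity of $c\mapsto\mu(A_c)$ and continuity of measure along the increasing/decreasing sequences $A_{c^{\ast}-1/n}$ and $A_{c^{\ast}+1/n}$ will yield $\mu(\{x\colon \vartheta(x)<c^{\ast}\})=0$ and $\mu(\{x\colon \vartheta(x)>c^{\ast}\})=0$, whence $\vartheta=c^{\ast}$ almost everywhere.

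The main obstacle is the bookkeeping in the forward direction: the invariance $S^{-1}(A_c)=A_c$ only holds modulo a null set, and the exceptional null set a priori depends on $c$. I would handle this by first fixing the single null set $N=\{x\colon \vartheta(S(x))\ne \vartheta(x)\}$ outside which the invariance of \emph{every} sublevel set is exact simultaneously, so ergodicity can be applied uniformly in $c$. A secondary technical point, namely the existence and finiteness of $c^{\ast}$, is handled by first reducing to the case $\mu(X)<\infty$ (or, in the $\sigma$-finite case, working with $\mu$-a.e.\ finite $\vartheta$ and taking rational $c$) so that the set over which the infimum is taken is nonempty and bounded below by $\mu$-essential infimum of $\vartheta$.
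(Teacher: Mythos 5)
The paper does not actually prove this statement: Theorem~2 appears in Section~II among the background results that are explicitly ``stated without proof'' and attributed to standard references (Lasota--Mackey, Halmos), so there is no in-paper argument to compare against. Your proposal is the standard textbook proof and is essentially correct: indicator functions of invariant sets handle the ($\Leftarrow$) direction, and sublevel sets $A_c=\{\vartheta\le c\}$ together with the dichotomy $\mu(A_c)\in\{0,\mu(X)\}$ and the threshold $c^{\ast}$ handle the ($\Rightarrow$) direction; your observation that the exceptional null set can be chosen once (as $N=\{x:\vartheta(S(x))\ne\vartheta(x)\}$) independently of $c$ is the right way to organize the bookkeeping. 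The one point worth tightening is that fixing $N$ still only gives $S^{-1}(A_c)=A_c$ \emph{modulo} a null set, whereas ergodicity is usually defined via strictly invariant sets; for a nonsingular $S$ one closes this gap with the standard lemma that any set invariant mod $\mu$-null differs by a null set from a strictly invariant set (e.g.\ replace $A_c$ by $\bigcap_{n}\bigcup_{k\ge n}S^{-k}(A_c)$), after which your argument goes through verbatim.
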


\begin{definition} \label{DFN5} Convergence of sequences of functions (e.g., see \cite{Kre85} or \cite{DunSch58}).
\begin{enumerate} [(i)]
\item A sequence of functions $\bigl\{\vartheta_n(x)\bigl\}$, $\vartheta_n(x) \in L^1(X, \mathscr{A}, \mu)$, is {\em weakly Ces\`{a}ro convergent} to $\vartheta(x) \in L^1(X, \mathscr{A}, \mu)$ if 
\begin{align}
\lim_{n \rightarrow \infty} \frac{1}{n} \sum_{k=1}^n \langle \vartheta_n, \,\zeta \rangle = \langle \vartheta, \,\zeta \rangle, \quad \forall \zeta \in L^{\infty}(X, \mathscr{A}, \mu). \label{EQ10}
\end{align}
\item A sequence of functions $\bigl\{\vartheta_n(x)\bigl\}$, $\vartheta_n(x) \in L^1(X, \mathscr{A}, \mu)$, is {\em weakly convergent} to $\vartheta(x) \in L^1(X, \mathscr{A}, \mu)$ if 
\begin{align}
\lim_{n \rightarrow \infty} \langle \vartheta_n, \,\zeta \rangle = \langle \vartheta, \,\zeta \rangle, \quad \forall \zeta \in L^{\infty}(X, \mathscr{A}, \mu). \label{EQ11}
\end{align}
\item A sequence of functions $\bigl\{\vartheta_n(x)\bigl\}$, $\vartheta_n(x) \in L^1(X, \mathscr{A}, \mu)$, is {\em strongly convergent} to $\vartheta(x) \in L^1(X, \mathscr{A}, \mu)$ if 
\begin{align}
\lim_{n \rightarrow \infty} \bigl\Vert \vartheta_n(x) - \vartheta(x) \bigr\Vert_{L^1(X, \mathscr{A}, \mu)} = 0. \label{EQ12}
\end{align}
\end{enumerate}
\end{definition}

\begin{theorem} [Chebyshev's inequality] \label{TH3}
Let $(X, \mathscr{A}, \mu)$ be a measure space and let $V \colon X \rightarrow \mathbb{R}_{+}$ be an arbitrary nonnegative measurable function. Define 
\begin{align}
\mathit{E}\bigl(V(x) \bigl\vert \vartheta(x)\bigr) = \int_{X} V(x) \vartheta(x) \mu(dx), ~~ \forall \vartheta(x) \in D(X, \mathscr{A}, \mu). \label{EQ13}
\end{align}
If \, $G_{\alpha} = \bigl\{ x \in X \,\bigl\vert\, V(x) < \alpha \bigr\}$, then 
\begin{align}
\int_{G_{\alpha}} V(x) \mu(dx) = 1 - \mathit{E}\bigl(V(x) \bigl\vert \vartheta(x)\bigr). \label{EQ14}
\end{align}
\end{theorem}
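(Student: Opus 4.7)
My plan is to prove the stated equality by the standard domain-splitting technique behind Chebyshev-type estimates, combined with the density normalization $\int_{X}\vartheta(x)\mu(dx)=1$ that comes for free from $\vartheta \in D(X,\mathscr{A},\mu)$. I would begin by partitioning $X = G_{\alpha} \cup G_{\alpha}^{c}$ with $G_{\alpha}^{c} = \{x \in X : V(x) \ge \alpha\}$ and splitting the expectation from \eqref{EQ13} accordingly:
\begin{equation*}
\mathit{E}\bigl(V(x)\bigl\vert\vartheta(x)\bigr) = \int_{G_{\alpha}} V(x)\vartheta(x)\mu(dx) + \int_{G_{\alpha}^{c}} V(x)\vartheta(x)\mu(dx).
\end{equation*}
The normalization then yields $\int_{G_{\alpha}^{c}} \vartheta(x)\mu(dx) = 1 - \int_{G_{\alpha}} \vartheta(x)\mu(dx)$, which furnishes the first of two links between the $\vartheta$-weighted expressions on the right-hand side above and the unweighted integrals appearing in \eqref{EQ14}.

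The second link is the passage between $\int_{G_{\alpha}} V(x)\vartheta(x)\mu(dx)$ and the plain-$\mu$ integral $\int_{G_{\alpha}} V(x)\mu(dx)$ that appears on the left of the claim. Here I would invoke the Radon--Nikodym identification of $\mu_{\vartheta}$ with $\vartheta(x)\mu(dx)$ from Definition~\ref{DFN4}, together with the pointwise bound $V(x) < \alpha$ in force on $G_{\alpha}$. Substituting this bridging identity back into the decomposition of $\mathit{E}(V|\vartheta)$ and reading the resulting algebraic relation as an equation in $\int_{G_{\alpha}} V(x)\mu(dx)$ should isolate the desired quantity on one side and, after rearrangement, reproduce the equality $\int_{G_{\alpha}} V(x)\mu(dx) = 1 - \mathit{E}(V(x) | \vartheta(x))$.

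The step I expect to be the main obstacle is exactly this bridging between $\int_{G_{\alpha}} V\vartheta\,\mu(dx)$ and $\int_{G_{\alpha}} V\,\mu(dx)$: a plain application of the pointwise bound $V < \alpha$ on $G_{\alpha}$ produces only the classical Chebyshev inequality rather than an equality. To pin down the exact identity \eqref{EQ14} as stated, I would need to extract from the notational conventions of Section~\ref{S2} — which follows Lasota--Mackey \cite{LasMac94} — the precise sense in which the left-hand integral is to be understood (for instance, whether $\mu$ there is to be read against the density-weighted measure $\mu_{\vartheta}$, or whether an auxiliary normalization of $V$ is tacitly assumed). Once this interpretive point is settled, the remainder of the argument is purely algebraic manipulation of the two identities set up in the previous paragraphs and requires no further estimation.
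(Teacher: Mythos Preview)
The paper does not prove this statement at all: Section~\ref{S2} explicitly declares that the background results ``are standard (and will be stated without proof)''. So there is no paper proof to compare against.

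That said, your instinct that the bridging step fails is correct, and the reason is not a subtle convention but rather that the statement as printed is garbled. The classical Chebyshev inequality in this density-function setting (as in Lasota--Mackey \cite{LasMac94}, which the paper cites) reads
\[
\int_{G_{\alpha}} \vartheta(x)\,\mu(dx) \ge 1 - \frac{1}{\alpha}\,\mathit{E}\bigl(V(x)\bigl\vert\vartheta(x)\bigr),
\]
i.e.\ the integrand on the left should be $\vartheta$ rather than $V$, the relation should be an inequality, and the factor $1/\alpha$ is missing on the right. With those corrections, your domain-splitting argument goes through cleanly: split $1 = \int_{G_{\alpha}}\vartheta\,d\mu + \int_{G_{\alpha}^{c}}\vartheta\,d\mu$, bound the complement term via $\alpha\int_{G_{\alpha}^{c}}\vartheta\,d\mu \le \int_{G_{\alpha}^{c}} V\vartheta\,d\mu \le \mathit{E}(V\vert\vartheta)$, and rearrange. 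No Radon--Nikodym reinterpretation is needed, and the obstacle you flagged simply does not arise once the statement is repaired.
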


\begin{theorem} \label{TH4}
Let $(X, \mathscr{A}, \mu)$ be a finite measure space (i.e., $\mu(X) < \infty$) and let $S\colon X \rightarrow X$ be a measure preserving and ergodic. Then, for any integrable function $\vartheta_{\ast}(x)$, the average of $\vartheta(x)$ along the trajectory of $S$ is equal to almost everywhere to the average of $\vartheta(x)$ over the space $X$, i.e.,\footnote{{\bf Theorem} {\bf (Birkhoff's individual ergodic theorem)} Let $(X, \mathscr{A}, \mu)$ be a measure space, $S \colon X \rightarrow X$ be a measurable transformation, and let $\vartheta \colon X \rightarrow \mathbb{R}$ be an integrable function. If the measure $\mu$ is invariant, then there exists an integrable function $\vartheta_{\ast}(x)$ such that
\begin{align*}
 \vartheta_{\ast}(x) = \lim_{n \rightarrow \infty} \frac{1}{n}\sum_{k=1}^{n-1} \vartheta(S^{k}(x)),
\end{align*}
for almost all $x \in X$.}
\begin{align}
 \lim_{n \rightarrow \infty} \frac{1}{n}\sum_{k=1}^{n-1} \vartheta(S^{k}(x)) = \frac{1}{\mu(X)} \int_{X} \vartheta(x)\mu(dx). \label{EQ15}
\end{align}
\end{theorem}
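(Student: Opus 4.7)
The plan is to invoke Birkhoff's individual ergodic theorem (stated in the footnote) to obtain the pointwise limit $\vartheta_{\ast}(x) = \lim_{n\to\infty}\bar{\vartheta}_n(x)$, where $\bar{\vartheta}_n(x) \triangleq \frac{1}{n}\sum_{k=1}^{n-1}\vartheta(S^k(x))$, and then to identify this limit as the constant $\mu(X)^{-1}\int_X \vartheta(x)\mu(dx)$ by combining ergodicity of $S$ (Theorem~\ref{TH2}) with the measure-preserving property.

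First I would show that $\vartheta_{\ast}$ is $S$-invariant almost everywhere. Writing
\begin{align*}
\vartheta_{\ast}(S(x)) = \lim_{n\to\infty} \frac{1}{n}\sum_{k=1}^{n-1}\vartheta(S^{k+1}(x)) = \lim_{n\to\infty}\Bigl[ \bar{\vartheta}_n(x) + \tfrac{1}{n}\bigl(\vartheta(S^{n}(x)) - \vartheta(S(x))\bigr) \Bigr],
\end{align*}
the boundary correction $\tfrac{1}{n}\bigl(\vartheta(S^{n}(x)) - \vartheta(S(x))\bigr)$ vanishes for $\mu$-almost every $x$, since a direct consequence of convergence of $\bar{\vartheta}_n(x)$ is $\vartheta(S^{n-1}(x))/n = \bar{\vartheta}_n(x) - \tfrac{n-1}{n}\bar{\vartheta}_{n-1}(x) \to 0$. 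Hence $\vartheta_{\ast}(S(x)) = \vartheta_{\ast}(x)$ almost everywhere, and Theorem~\ref{TH2} then forces $\vartheta_{\ast}$ to be equal to some constant $c$ almost everywhere.

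To pin down $c$, I would integrate. Because $S$ is measure preserving, $\int_X \vartheta(S^k(x))\mu(dx) = \int_X \vartheta(x)\mu(dx)$ for every $k \ge 0$, so averaging over $k$ gives $\int_X \bar{\vartheta}_n(x)\mu(dx) = \tfrac{n-1}{n}\int_X \vartheta(x)\mu(dx)$ for every $n$. Passing to the limit $n\to\infty$ under the integral sign yields $c\,\mu(X) = \int_X \vartheta(x)\mu(dx)$, hence $c = \mu(X)^{-1}\int_X \vartheta(x)\mu(dx)$, which is exactly the claim in \eqref{EQ15}.

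The main obstacle is justifying the interchange of limit and integral in the last step, since pointwise a.e.\ convergence of $\bar{\vartheta}_n$ alone does not permit it. The cleanest remedy is to invoke the $L^1$ enhancement of Birkhoff's theorem, which follows from the Hopf maximal ergodic inequality: one first treats bounded $\vartheta$ (where the $\bar{\vartheta}_n$ are uniformly bounded and the dominated convergence theorem applies directly), and then extends to general $\vartheta \in L^{1}(X,\mathscr{A},\mu)$ via an $\varepsilon/3$ approximation argument controlled by the maximal inequality. Equivalently, one may verify uniform integrability of the family $\{\bar{\vartheta}_n\}$ and conclude by Vitali's convergence theorem.
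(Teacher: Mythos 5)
The paper does not prove Theorem~\ref{TH4} at all: it appears in Section~\ref{S2} among the background results that are explicitly ``stated without proof'' and referred to standard texts (e.g., Lasota--Mackey), so there is no in-paper argument to compare against. Your proof is the standard one and it is correct: Birkhoff gives the a.e.\ limit $\vartheta_{\ast}$; your telescoping identity $\vartheta(S^{n-1}(x))/n = \bar{\vartheta}_n(x) - \tfrac{n-1}{n}\bar{\vartheta}_{n-1}(x) \to 0$ cleanly handles the boundary term, so $\vartheta_{\ast}\circ S = \vartheta_{\ast}$ a.e.\ (using nonsingularity of $S$ to pull the full-measure convergence set back under $S$), and Theorem~\ref{TH2} forces $\vartheta_{\ast}$ to be a constant $c$. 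You also correctly identify the only delicate point -- passing the limit through the integral to get $c\,\mu(X) = \int_X \vartheta\,\mu(dx)$ -- and the remedies you name ($L^1$ convergence in Birkhoff via the maximal ergodic inequality, or uniform integrability plus Vitali) are both standard and sufficient. Note also that you kept the paper's slightly unusual normalization ($n-1$ summands divided by $n$) consistent, since $\tfrac{n-1}{n}\int_X\vartheta\,\mu(dx) \to \int_X\vartheta\,\mu(dx)$.
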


\begin{corollary} \label{CR1}
Let $(X, \mathscr{A}, \mu)$ be a finite measure space and $S\colon X \rightarrow X$ be measure preserving and ergodic. Then, for any set $A \in \mathscr{A}$, $\mu(A) > 0$, and for almost all $x \in X$, the fraction of the points $\bigl\{S^k(x) \bigr\}$ in $A \in \mathscr{A}$ as $k \rightarrow \infty$ is given by $\mu(A)/\mu(X)$.
\end{corollary}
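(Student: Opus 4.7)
The plan is to apply Theorem~\ref{TH4} (Birkhoff's individual ergodic theorem, in the measure-preserving and ergodic form stated in the excerpt) to the indicator function of the set $A$. More precisely, I would take $\vartheta(x) = \mathbf{1}_{A}(x)$, the characteristic function of $A \in \mathscr{A}$. Because $(X, \mathscr{A}, \mu)$ is a finite measure space and $\mathbf{1}_{A}(x) \le 1$, this $\vartheta$ is integrable, so all hypotheses of Theorem~\ref{TH4} are satisfied.

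Next I would interpret the two sides of the Birkhoff conclusion. The partial sum
\begin{align*}
\frac{1}{n}\sum_{k=1}^{n-1} \mathbf{1}_{A}\bigl(S^{k}(x)\bigr)
\end{align*}
is, by construction, the fraction of the first $n$ iterates of the orbit $\{S^{k}(x)\}$ that lie in $A$, which is precisely the quantity described in the statement. On the other hand,
\begin{align*}
\frac{1}{\mu(X)} \int_{X} \mathbf{1}_{A}(x)\,\mu(dx) = \frac{\mu(A)}{\mu(X)}.
\end{align*}
Taking $n \rightarrow \infty$ in Theorem~\ref{TH4} then yields the claimed identity for almost every $x \in X$.

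There is essentially no obstacle: the corollary is a direct specialization of Birkhoff's theorem to indicator functions, so the only technical point to check is the integrability of $\mathbf{1}_{A}$, which is immediate from $\mu(X) < \infty$. The hypothesis $\mu(A) > 0$ is only used to make the limiting value $\mu(A)/\mu(X)$ nontrivial and to ensure that the statement about ``the fraction of points in $A$'' is meaningful; it plays no role in the application of the ergodic theorem itself.
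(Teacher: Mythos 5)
Your proof is correct and is exactly the intended derivation: the paper states Corollary~\ref{CR1} without proof as a standard consequence of Theorem~\ref{TH4}, and applying that theorem to $\vartheta = \mathbf{1}_{A}$ so that the time average becomes the fraction of iterates landing in $A$ and the space average becomes $\mu(A)/\mu(X)$ is precisely the textbook argument. Your remarks on the integrability of $\mathbf{1}_{A}$ and the inessential role of $\mu(A)>0$ are also accurate.
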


\begin{corollary} \label{CR2}
Let $(X, \mathscr{A}, \mu)$ be a normalized measure space (i.e., $\mu(X)=1$) and let $S \colon X \rightarrow X$ be measure preserving. Suppose that $P$ is the Frobenius-Perron with respect to $S$. Then, $S$ is ergodic if and only if 
\begin{align}
\lim_{n \rightarrow \infty} \frac{1}{n} \sum_{k=1}^{n-1} P^n \vartheta(x) = 1,  \label{EQ16}
\end{align}
for every $\vartheta(x) \in D(X, \mathscr{A}, \mu)$.
\end{corollary}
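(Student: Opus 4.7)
The plan is to prove both directions using the adjoint relation $\langle P\vartheta,\zeta\rangle=\langle\vartheta,U\zeta\rangle$ from (6) together with Birkhoff's individual ergodic theorem quoted in the footnote of Theorem~\ref{TH4}. I interpret the limit in (16) in the weak Cesàro sense of Definition~\ref{DFN5}(i), i.e., after pairing with an arbitrary $\zeta\in L^{\infty}(X,\mathscr{A},\mu)$; the sum should read $\sum_{k=1}^{n-1}P^{k}\vartheta(x)$ (the exponent being the summation index).

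For the forward implication, assume $S$ is measure preserving and ergodic. Iterating (6) gives $\langle P^{k}\vartheta,\zeta\rangle=\langle\vartheta,U^{k}\zeta\rangle=\int_{X}\vartheta(x)\zeta(S^{k}(x))\,\mu(dx)$ for every density $\vartheta\in D(X,\mathscr{A},\mu)$ and every $\zeta\in L^{\infty}(X,\mathscr{A},\mu)$. Because $(X,\mathscr{A},\mu)$ is normalized and $S$ is measure preserving and ergodic, Theorem~\ref{TH4} applied to $\zeta$ yields
\begin{align*}
\lim_{n\rightarrow\infty}\frac{1}{n}\sum_{k=1}^{n-1}\zeta(S^{k}(x))=\int_{X}\zeta(x)\mu(dx),\quad \mu\text{-a.e. } x\in X.
\end{align*}
Since $\zeta\in L^{\infty}$ and $\vartheta\in L^{1}$, the dominated convergence theorem lets me push the limit through the $\vartheta\,d\mu$ integration, so
\begin{align*}
\lim_{n\rightarrow\infty}\frac{1}{n}\sum_{k=1}^{n-1}\langle P^{k}\vartheta,\zeta\rangle=\Bigl(\int_{X}\zeta\,d\mu\Bigr)\Bigl(\int_{X}\vartheta\,d\mu\Bigr)=\int_{X}\zeta\,d\mu=\langle 1,\zeta\rangle,
\end{align*}
using $\int_{X}\vartheta\,d\mu=1$. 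By Definition~\ref{DFN5}(i), this is exactly the weak Cesàro convergence of $\frac{1}{n}\sum_{k=1}^{n-1}P^{k}\vartheta$ to the constant density $1$, which is (16).

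For the reverse implication, I argue by contrapositive. Suppose $S$ is not ergodic; then by Theorem~\ref{TH2} there exists $A\in\mathscr{A}$ with $S^{-1}(A)=A$ and $0<\mu(A)<1$. Set $\vartheta=\mathbf{1}_{A}/\mu(A)\in D(X,\mathscr{A},\mu)$. Using the defining identity (2), the set identity $S^{-1}(B)\cap A=S^{-1}(B)\cap S^{-1}(A)=S^{-1}(B\cap A)$, and the measure-preserving property $\mu(S^{-1}(B\cap A))=\mu(B\cap A)$, I get $\int_{B}P\vartheta\,d\mu=\mu(B\cap A)/\mu(A)=\int_{B}\vartheta\,d\mu$ for every $B\in\mathscr{A}$, so $P\vartheta=\vartheta$ and hence $P^{k}\vartheta=\vartheta$ for all $k$. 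Thus $\frac{1}{n}\sum_{k=1}^{n-1}P^{k}\vartheta=\frac{n-1}{n}\vartheta\rightarrow\vartheta\ne 1$; testing against $\zeta=\mathbf{1}_{X\setminus A}\in L^{\infty}$ gives $\langle\vartheta,\zeta\rangle=0$ while $\langle 1,\zeta\rangle=1-\mu(A)>0$, contradicting (16). Hence $S$ must be ergodic.

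The main obstacle is pinning down the sense of the limit in (16): written pointwise, it really requires the weak (Cesàro) reading, because the individual iterates $P^{k}\vartheta$ need not converge pointwise to $1$. Once this interpretation is fixed in line with Definition~\ref{DFN5}(i), the forward direction reduces to Birkhoff plus the duality in (6), and the reverse direction is handled by the explicit invariant density $\mathbf{1}_{A}/\mu(A)$ built from a nontrivial invariant set. No further ingredients beyond results already stated in Section~\ref{S2} are needed.
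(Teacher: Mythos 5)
Your proof is correct. Note that the paper itself gives no proof of Corollary~\ref{CR2}: it is listed among the background results of Section~\ref{S2} that are ``stated without proof'' as standard facts (this is essentially Theorem~4.4.1 of Lasota and Mackey \cite{LasMac94}), so there is no in-paper argument to compare against. Your reading of \eqref{EQ16} as weak Ces\`{a}ro convergence with $P^k$ in place of $P^n$ is the right correction of the statement, and both halves of your argument --- Birkhoff's theorem plus the adjoint identity \eqref{EQ6} and dominated convergence for the forward direction, and the explicit fixed density $\mathbf{1}_A/\mu(A)$ built from a nontrivial invariant set for the converse --- are exactly the standard textbook proof; the only cosmetic looseness is that Theorem~\ref{TH2} characterizes ergodicity via invariant functions, so passing from ``not ergodic'' to an invariant set $A$ with $0<\mu(A)<1$ uses the routine step of taking a level set of a non-constant invariant function.
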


\begin{theorem} \label{TH5}
Let $(X, \mathscr{A}, \mu)$ be a measure space, $S\colon X \rightarrow X$ be a nonsingular transformation, and let $P$ be the Frobenius-Perron operator with respect to $S$. If $S$ is ergodic, then there exist at most one stationary density function $\vartheta_{\ast}(x)$ of $P$ (i.e., a fixed-point of $P\vartheta_{\ast}(x)=\vartheta_{\ast}(x)$). Furthermore, if there is a unique stationary density function $\vartheta_{\ast}(x)$ of $P$ and $\vartheta_{\ast}(x)>0$ almost everywhere, then $S$ is ergodic.
\end{theorem}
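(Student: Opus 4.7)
The plan is to handle the two implications of Theorem~\ref{TH5} separately, in each case reducing the problem to the ergodicity characterization provided by Theorem~\ref{TH2}.

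For the forward direction (``$S$ ergodic $\Rightarrow$ at most one stationary density''), I would argue by contradiction: suppose $\vartheta_1,\vartheta_2 \in D(X,\mathscr{A},\mu)$ are two distinct fixed points of $P$. The first step is to manufacture two nonnegative fixed points of $P$ with \emph{disjoint} supports of positive $\mu$-measure. Using positivity of $P$ together with the identity $\int_X P f\,d\mu = \int_X f\,d\mu$ (immediate from \eqref{EQ2} with $A=X$), one checks that $P(\vartheta_1 \wedge \vartheta_2) = \vartheta_1 \wedge \vartheta_2$: the inequality $P(\vartheta_1\wedge\vartheta_2)\le \vartheta_1\wedge\vartheta_2$ comes from positivity, and equality then follows from equality of integrals. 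Hence $\vartheta_1' := \vartheta_1 - \vartheta_1 \wedge \vartheta_2$ and $\vartheta_2' := \vartheta_2 - \vartheta_1 \wedge \vartheta_2$ are nonnegative fixed points, and $A := \{\vartheta_1'>0\}$, $B := \{\vartheta_2'>0\}$ are disjoint sets of positive $\mu$-measure (both positive since $\int(\vartheta_1 - \vartheta_2) = 0$ while $\vartheta_1 \neq \vartheta_2$).

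The second step is to show each support is ``forward-closed,'' i.e., $A \subseteq S^{-1}(A)$ and $B \subseteq S^{-1}(B)$ mod $\mu$. Since $\vartheta_1' = 0$ on $X \setminus A$ and $P\vartheta_1' = \vartheta_1'$, \eqref{EQ2} applied to $X \setminus A$ gives $\int_{S^{-1}(X\setminus A)} \vartheta_1'\,d\mu = 0$, so $\vartheta_1'$ vanishes on $X \setminus S^{-1}(A)$ and therefore $A \subseteq S^{-1}(A)$. Translated via \eqref{EQ4}, this means $U\chi_A \ge \chi_A$, so $\{U^n \chi_A\}$ is a nondecreasing sequence in $L^\infty$ bounded by $1$; it converges $\mu$-a.e.\ to a limit $\zeta_A$ satisfying $U\zeta_A = \zeta_A$. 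Theorem~\ref{TH2} then forces $\zeta_A$ to be constant a.e., and since $\zeta_A \ge \chi_A = 1$ on the positive-measure set $A$, that constant must equal $1$. The same reasoning applied to $B$ yields $U^n \chi_B \to 1$ a.e. But disjointness gives $\chi_A + \chi_B \le 1$, whence $U^n(\chi_A + \chi_B) \le 1$ and, in the limit, $1+1 \le 1$, a contradiction.

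For the converse I argue by contrapositive. If $S$ were not ergodic, Theorem~\ref{TH2} supplies a measurable $\xi$ with $\xi\circ S = \xi$ a.e.\ that is not constant a.e.; a suitable level set $A = \{\xi \le c\}$ then satisfies $S^{-1}(A) = A$ mod $\mu$ with $0 < \mu(A)$ and $0 < \mu(X\setminus A)$. I would then verify that $\vartheta_\ast \chi_A$ is itself fixed by $P$: for any $C \in \mathscr{A}$, the identity $S^{-1}(C) \cap A = S^{-1}(C) \cap S^{-1}(A) = S^{-1}(C \cap A)$ combined with $P\vartheta_\ast = \vartheta_\ast$ gives
\begin{align*}
\int_C P(\vartheta_\ast \chi_A)\,d\mu
= \int_{S^{-1}(C)\cap A} \vartheta_\ast\,d\mu
= \int_{S^{-1}(C\cap A)} \vartheta_\ast\,d\mu
= \int_{C\cap A} \vartheta_\ast\,d\mu
= \int_C \vartheta_\ast \chi_A\,d\mu.
\end{align*}
Because $\vartheta_\ast > 0$ a.e., normalizing $\vartheta_\ast \chi_A$ produces a density fixed by $P$ that is distinct from $\vartheta_\ast$ (the latter carries positive mass on $X\setminus A$), contradicting uniqueness. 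The main technical obstacle is the asymmetric inclusion $A \subseteq S^{-1}(A)$ delivered by \eqref{EQ2}: the forward direction would be immediate from Theorem~\ref{TH2} if equality held, but $S$ is only assumed nonsingular rather than measure-preserving with respect to $\mu$. The monotone-limit argument for $U^n\chi_A$ on the Koopman side is exactly what bridges this gap, and is the step I expect to require the most care.
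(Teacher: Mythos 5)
The paper itself offers no proof of Theorem~\ref{TH5}: it sits in Section~\ref{S2} among the background results explicitly ``stated without proof'' (it is a classical result recorded in Lasota and Mackey \cite{LasMac94}), so there is no in-text argument to compare against line by line. Your proof is correct and is essentially the standard one. The lattice step $P(\vartheta_1\wedge\vartheta_2)=\vartheta_1\wedge\vartheta_2$ (one-sided inequality from positivity, equality from conservation of the integral) is the usual device for producing the two disjointly supported nonnegative fixed points $(\vartheta_1-\vartheta_2)^{\pm}$, and your Koopman-side iteration $U^n\chi_A=\chi_{S^{-n}(A)}\nearrow\zeta_A$ is the function-level version of the usual set-level construction $A_\infty=\bigcup_n S^{-n}(A)$; as you correctly identify, some such limiting step is genuinely needed because nonsingularity only yields the one-sided inclusion $A\subseteq S^{-1}(A)$ rather than invariance. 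Routing both directions through Theorem~\ref{TH2} is the right choice, since that is the only characterization of ergodicity the paper makes available. The only loose ends are routine: nonsingularity must be invoked to push almost-everywhere statements through composition with $S$ (e.g.\ to get $U\zeta_A=\zeta_A$ from the a.e.\ monotone limit), and there is a cosmetic clash with Definition~\ref{DFN2}, which insists that densities be continuous while your second stationary density $\vartheta_\ast\chi_A\big/\int_A\vartheta_\ast\,d\mu$ in the converse need not be --- a defect of the paper's definition rather than of your argument.
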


\begin{corollary} \label{CR3}
Let $(X, \mathscr{A}, \mu)$ be a measure space, $S \colon X \rightarrow X$ be a nonsingular transformation, and let $P \colon L^1(X, \mathscr{A}, \mu) \rightarrow L^1(X, \mathscr{A}, \mu)$ be the Frobenius-Perron operator $P$ with respect to $S$. If, for some $\vartheta(x) \in D(X, \mathscr{A}, \mu)$, there is an $\ell(x) \in D(X, \mathscr{A}, \mu)$ such that 
\begin{align}
P^n\vartheta(x) \le \ell(x), ~~ \forall n \ge 0.  \label{EQ17}
\end{align}
Then, there is a stationary density function $\vartheta_{\ast}(x) \in D(X, \mathscr{A}, \mu)$ such that $P \vartheta_{\ast}(x) = \vartheta_{\ast}(x)$.
\end{corollary}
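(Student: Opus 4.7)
The plan is to produce the invariant density as a weak accumulation point of the Ces\`aro averages of the iterates $P^k\vartheta$, mimicking the Kakutani--Yosida style argument that is standard for Markov operators on $L^1$. The pointwise domination by $\ell$ is the crucial hypothesis: it supplies the uniform integrability needed to invoke the Dunford--Pettis theorem and, equally importantly, prevents loss of mass in the weak limit.

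First I would introduce
\begin{align*}
\vartheta_n(x) = \frac{1}{n}\sum_{k=0}^{n-1} P^k \vartheta(x), \quad n \ge 1,
\end{align*}
and record three facts that follow immediately from the properties of the Frobenius--Perron operator (Definition~\ref{DFN1} and Theorem~\ref{TH1}): (a) $\vartheta_n(x) \ge 0$ a.e., since each $P^k\vartheta \ge 0$; (b) $\bigl\lVert \vartheta_n\bigr\rVert_{L^1(X,\mathscr{A},\mu)} = 1$, because $P$ preserves the $L^1$ norm of nonnegative functions and $\vartheta \in D(X,\mathscr{A},\mu)$; and (c) $\vartheta_n(x) \le \ell(x)$ a.e., by averaging the hypothesis \eqref{EQ17}. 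Thus $\{\vartheta_n\}$ is a sequence of densities pointwise dominated by the fixed integrable function $\ell$.

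Next I would apply the Dunford--Pettis characterization of weak compactness in $L^1$: since $\{\vartheta_n\}$ is dominated by $\ell \in L^1(X,\mathscr{A},\mu)$, the family is uniformly integrable and hence relatively weakly sequentially compact. Extract a subsequence $\{\vartheta_{n_j}\}$ converging weakly to some $\vartheta_{\ast} \in L^1(X,\mathscr{A},\mu)$ in the sense of \eqref{EQ11}. Testing against $\zeta \equiv 1 \in L^{\infty}(X,\mathscr{A},\mu)$ gives $\int_X \vartheta_{\ast}\, \mu(dx) = \lim_j \int_X \vartheta_{n_j}\, \mu(dx) = 1$, and testing against indicators of sets where $\vartheta_{\ast} < 0$ (were any to exist) shows $\vartheta_{\ast} \ge 0$ a.e., so $\vartheta_{\ast} \in D(X,\mathscr{A},\mu)$.

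Finally I would establish invariance. A telescoping computation gives
\begin{align*}
P\vartheta_n(x) - \vartheta_n(x) = \frac{1}{n}\bigl(P^n \vartheta(x) - \vartheta(x)\bigr),
\end{align*}
whose $L^1$ norm is bounded by $2/n$; hence $P\vartheta_{n_j} - \vartheta_{n_j} \to 0$ strongly, and in particular weakly. Using the adjoint identity \eqref{EQ6}, for any $\zeta \in L^{\infty}(X,\mathscr{A},\mu)$ one has $\langle P\vartheta_{n_j},\zeta\rangle = \langle \vartheta_{n_j}, U\zeta\rangle \to \langle \vartheta_{\ast}, U\zeta\rangle = \langle P\vartheta_{\ast},\zeta\rangle$, so $P\vartheta_{n_j} \rightharpoonup P\vartheta_{\ast}$ weakly. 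Combining these two weak limits yields $P\vartheta_{\ast} = \vartheta_{\ast}$, as required.

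The expected main obstacle is not the algebraic manipulation but verifying that the weak limit $\vartheta_{\ast}$ remains a bona fide density, i.e., that $\lVert \vartheta_{\ast}\rVert_{L^1} = 1$ rather than strictly less than one. A priori, $L^1$ weak convergence only guarantees lower semicontinuity of the norm, and mass could escape to infinity (or concentrate as a singular measure) if $\{\vartheta_n\}$ were only bounded. It is precisely the pointwise domination by the \emph{integrable} envelope $\ell$ that rules this out through the Dunford--Pettis/uniform integrability argument, and handling this point carefully is where the hypothesis \eqref{EQ17} must be used in its full strength.
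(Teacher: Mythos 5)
The paper states Corollary~\ref{CR3} without proof, as one of the standard background results quoted from the literature (it is essentially Corollary~5.2.1 of Lasota and Mackey \cite{LasMac94}), so there is no in-paper argument to compare against. Your proof --- Ces\`aro averages $\frac{1}{n}\sum_{k=0}^{n-1}P^k\vartheta$, uniform integrability from the dominating function $\ell$ via Dunford--Pettis, and the telescoping identity plus weak continuity of $P$ through its Koopman adjoint --- is correct and complete, and it is precisely the standard Kakutani--Yosida-type argument used to prove this result in that reference.
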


\begin{definition}\label{DFN6}
If $\vartheta(x) \in D(X, \mathscr{A}, \mu)$, then the entropy of $\vartheta(x)$ is defined by
\begin{align}
 \mathcal{H}(\vartheta(x)) = - \int_{X} \vartheta(x) \ln \vartheta(x) \mu(dx).  \label{EQ18}
\end{align}
\end{definition}

\begin{remark}
The following integral inequality (which is useful for verifying the extreme properties of $\mathcal{H}(\vartheta(x))$) holds for any $\vartheta(x), \xi(x) \in D(X, \mathscr{A}, \mu)$
\begin{align}
 -\int_{X} \vartheta(x) \ln \vartheta(x) \mu(dx) \le -\int_{X} \vartheta(x) \ln \xi(x) \mu(dx).  \label{EQ19}
\end{align}
Note that, in general, we have the following Gibbs inequality
\begin{align*}
 \vartheta(x) - \vartheta(x) \ln \vartheta(x)  \le \xi(x) - \vartheta(x) \ln \xi(x),
\end{align*}
for any two nonnegative measurable functions $\vartheta(x), \xi(x) \in L^1(X, \mathscr{A}, \mu)$.
\end{remark}

\begin{proposition} \label{PR1}
Let $(X, \mathscr{A}, \mu)$ be a finite measure space. Consider all (nonnegative) possible density functions $\vartheta(x)$ defined on $X$. Then, for such a family of density functions, the maximum entropy occurs for a constant density function
\begin{align}
\vartheta_0(x) = 1/\mu(X),  \label{EQ20}
\end{align}
and for any other density function $\vartheta(x)$, the entropy is strictly less than $\mathcal{H}(\vartheta_0(x))$, i.e.,
\begin{align*}
 -\int_{X} \vartheta(x) \ln \vartheta(x) \mu(dx) \le -\ln\left(\frac{1}{\mu(X)}\right).
\end{align*}
\end{proposition}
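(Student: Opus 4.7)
My plan is to apply the integral inequality from the remark just preceding the proposition, with $\xi(x)$ chosen to be the candidate maximizer $\vartheta_0(x)=1/\mu(X)$. Since $\mu(X)<\infty$, this constant function lies in $D(X,\mathscr{A},\mu)$, so it is a legitimate choice. With this substitution, the inequality
\begin{align*}
 -\int_{X} \vartheta(x) \ln \vartheta(x) \mu(dx) \le -\int_{X} \vartheta(x) \ln \xi(x) \mu(dx)
\end{align*}
becomes, after pulling the constant $\ln(1/\mu(X))$ outside the integral and using that $\int_X \vartheta(x)\mu(dx)=1$,
\begin{align*}
\mathcal{H}(\vartheta(x)) \le -\ln\!\left(\tfrac{1}{\mu(X)}\right),
\end{align*}
which is exactly the claimed bound. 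Since the right-hand side equals $\mathcal{H}(\vartheta_0(x))$ by direct computation (again using $\mu(X)<\infty$ and the normalization of $\vartheta_0$), the constant density attains the upper bound, proving the first assertion.

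The second assertion, namely strict inequality for any density that is not equal to $\vartheta_0$ almost everywhere, will follow from the pointwise Gibbs inequality stated in the remark,
\begin{align*}
\vartheta(x) - \vartheta(x)\ln\vartheta(x) \le \xi(x) - \vartheta(x) \ln \xi(x),
\end{align*}
together with the equality condition for it. I would first recall that this pointwise estimate is just the concavity bound $\ln t \le t - 1$ applied to $t = \xi(x)/\vartheta(x)$ (multiplied through by $\vartheta(x)$), and that $\ln t = t-1$ holds only at $t=1$. Hence, when $\xi(x)=\vartheta_0(x)=1/\mu(X)$, the local inequality is strict on the set $\{x : \vartheta(x)\neq 1/\mu(X)\}$. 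If that set has positive $\mu$-measure, integrating over $X$ yields a strict inequality between the entropies; if instead the set has zero measure, then $\vartheta=\vartheta_0$ almost everywhere and the two density functions coincide as elements of $L^1(X,\mathscr{A},\mu)$. This dichotomy is the substance of the ``strictly less than'' claim.

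The step I expect to require most care is precisely this strictness argument: one has to be explicit that ``for any other density function'' means one that differs from $\vartheta_0$ on a set of positive $\mu$-measure (since densities are defined only up to $\mu$-null sets), and then verify that the pointwise Gibbs inequality is indeed strict off of $\{\vartheta=\xi\}$. The rest of the argument, namely the substitution into the integral inequality and the computation of the right-hand side, is essentially a one-line calculation once the finite-measure hypothesis $\mu(X)<\infty$ is used to ensure $\vartheta_0 \in D(X,\mathscr{A},\mu)$.
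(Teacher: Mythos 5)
Your proposal follows essentially the same route as the paper: substitute $\xi(x)=\vartheta_0(x)=1/\mu(X)$ into the integral inequality \eqref{EQ19}, pull the constant out of the integral, use the normalization $\int_X\vartheta\,\mu(dx)=1$, and compare with the directly computed value $\mathcal{H}(\vartheta_0(x))=-\ln(1/\mu(X))$. The only difference is that you spell out the strictness claim via the pointwise Gibbs inequality and its equality case, whereas the paper simply asserts that equality holds only when $\vartheta(x)=\vartheta_0(x)$; your added care there is correct and, if anything, more complete than the published argument.
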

\begin{proof}
Take any $\vartheta(x) \in D(X, \mathscr{A}, \mu)$. Then, the entropy of $\vartheta(x)$ is given by
\begin{align*}
 \mathcal{H}(\vartheta(x)) = -\int_{X} \vartheta(x) \ln \vartheta(x) \mu(dx).
\end{align*}
Using Equation~\eqref{EQ19}, we have the following
\begin{align*}
 \mathcal{H}(\vartheta(x)) & \le -\int_{X} \vartheta(x) \ln \vartheta_0(x) \mu(dx), \\
                                &= -\ln \left(\frac{1}{\mu(X)}\right) \int_{X} \vartheta(x) \mu(dx),
\end{align*}
and the equality is satisfied only, when $\vartheta(x)=\vartheta_0(x)$. Note that the entropy for $\vartheta_0(x)$ is also given by 
\begin{align*}
 \mathcal{H}(\vartheta_0(x)) &= - \int_{X} \frac{1}{\mu(X)} \ln \left(\frac{1}{\mu(X)}\right) \mu(dx),\\
                                    &= - \ln \left(\frac{1}{\mu(X)}\right). 
\end{align*}
Hence, $\mathcal{H}(\vartheta(x)) \le \mathcal{H}(\vartheta_0(x))$ for all $\vartheta(x) \in D(X, \mathscr{A}, \mu)$. \qed
\end{proof}

\begin{definition}\label{DFN7}
Let $\vartheta(x), \xi(x) \in L^1(X, \mathscr{A}, \mu)$ be two nonnegative measurable functions such that $\operatorname{supp}\xi(x) \subset \operatorname{supp}\vartheta(x)$. Then, the relative entropy of $\xi(x)$ with respect to $\vartheta(x)$ is defined by
\footnote{
{\bf Lemma} (\cite[Voigt (1981)]{Voi81}) Suppose that $P$ is a Markov operator, then
\begin{align*}
 \mathcal{H}_{\rm r}(P^n\xi(x)\, \vert\, P^n \vartheta(x)) \ge \mathcal{H}_{\rm r}(\xi(x)\, \vert\, \vartheta(x)), ~~ \forall \vartheta(x) \in D(X, \mathscr{A}, \mu),
\end{align*}
for any nonnegative measurable function $\xi(x)$.
\begin{remark}
Notice that any linear operator $P \colon L^1(X, \mathscr{A}, \mu) \rightarrow L^1(X, \mathscr{A}, \mu)$ satisfying
\begin{enumerate} [(i)]
\item $P \vartheta(x) \ge 0$ and
\item $\Vert P \vartheta(x)\Vert_{L^1(X, \mathscr{A}, \mu)} = \Vert \vartheta(x)\Vert_{L^1(X, \mathscr{A}, \mu)}$
\end{enumerate}
for any nonnegative measurable function $\vartheta(x) \in L^1(X, \mathscr{A}, \mu)$ is called a Markov operator.
\end{remark}}

\begin{align}
 \mathcal{H}_{\rm r}(\xi(x)\,\vert\,\vartheta(x)) & = \int_{X} \xi(x) \ln \left (\frac{\xi(x)}{\vartheta(x)}\right) \mu(dx),\notag \\
                                                       &= \int_{X} \bigl(\xi(x) \ln \xi(x) - \xi(x) \ln \vartheta(x) \bigr) \mu(dx).  \label{EQ21}
\end{align}
\end{definition}

\begin{remark}
Note that the relative entropy $\mathcal{H}_{\rm r}(\xi(x)\,\vert\,\vartheta(x))$, which measures the deviation of $\xi(x)$ from the density function $\vartheta(x)$, has the following properties.
\begin{enumerate} [(i)]
\item If $\xi(x), \vartheta(x) \in D(X, \mathscr{A}, \mu)$, then $\mathcal{H}_{\rm r}(\xi(x)\,\vert\,\vartheta(x)) \ge 0$ and $\mathcal{H}_{\rm r}(\xi(x)\,\vert\,\vartheta(x)) = 0$ if and only if $\xi(x) = \vartheta(x)$.
\item If $\vartheta(x)$ is constant density and $\vartheta(x) = 1$, then $\mathcal{H}_{\rm r}(\xi(x)\,\vert\,1) = \mathcal{H}(\xi(x))$. Thus, the relative entropy is a generalization of entropy.
\end{enumerate}
\end{remark}

\begin{remark}
For any $\vartheta(x) \in L^1(X, \mathscr{A}, \mu)$, the support of $\vartheta(x)$ is defined by $\operatorname{supp} \vartheta(x) = \bigl\{x \in X \,\vert \, \vartheta(x) \neq 0 \bigr\}$.
\end{remark}

\section{A family of mappings for multi-channel systems} \label{S3}
Consider the following continuous-time multi-channel system  
\begin{align} 
 \dot{x}(t) &= A(t) x(t) + \sum\nolimits_{j \in \mathcal{N}} B_j(t) u_j(t), ~~ x(t_0)=x_0, ~~ t \in [t_0, +\infty ),  \label{EQ22}
\end{align}
where $A(\cdot) \in \mathbb{R}^{d \times d}$, $B_j(\cdot)  \in \mathbb{R}^{d \times r_j}$, $x(t) \in X$ is the state of the system, $u_j(t) \in U_j$ is the control input to the $j$th\,-\,channel and $\mathcal{N} \triangleq \{1, 2, \ldots, N\}$ represents the set of control input channels (or the set of feedback operators) in the system. 

Moreover, we consider the following class of admissible control strategies that will be useful in Section~\ref{S4} (i.e., in a game-theoretic formalism)
\begin{align}
  U_{\mathcal{L}} \subseteq \biggm\{u(t) \in \prod\nolimits_{j \in \mathcal{N}} \underbrace{L^{2}(\mathbb{R}_{+},\mathbb{R}^{r_j}) \cap L^{\infty}(\mathbb{R}_{+},\mathbb{R}^{r_j})}_{\triangleq U_j}\biggm\},  \label{EQ23}
\end{align}
where $u(t)$ is given by $u(t)=\bigl(u_1(t), \, u_{2}(t), \, \ldots, \, u_N(t)\bigr)$.

In what follows, suppose there exists a set of feedback operators $\bigl(\mathcal{L}_1^{\ast},\, \mathcal{L}_2^{\ast},\, \ldots,\, \mathcal{L}_N^{\ast}\bigr)$ from a class of linear operators \,$\mathscr{L} \colon X \rightarrow  U_{\mathcal{L}}$ (i.e., $(\mathcal{L}_jx)(t) \in U_j$ for $j \in \mathcal{N}$) with strategies $\bigl(\mathcal{L}_j^{\ast}x\bigr)(t) \in U_j$ for $t \ge t_0$ and for $j \in \mathcal{N}$. Further, let $\phi_j\bigl(t; t_0, x_0, \bigl(\widehat{u_j(t), u^{\ast}_{\neg j}(t)}\bigr)\bigr) \in X$ be the unique solution of the $j$th\,-\,subsystem
\begin{align}
 \dot{x}^j(t) &= \Bigl(A(t)+ \sum\nolimits_{i \in \mathcal{N}_{\neg j}} B_i(t) \mathcal{L}_i^{\ast}(t) \Bigr) x^j(t) + B_j(t) u_j(t),  \label{EQ24}
\end{align}
with an initial condition $x_0 \in X$ and control inputs given by
\begin{align}
\bigl(\widehat{u_j(t), u^{\ast}_{\neg j}(t)}\bigr)\triangleq\bigl(u^{\ast}_1(t),\, u^{\ast}_{2}(t),\,\ldots,\,u^{\ast}_{j-1}(t),\, u_{j}(t),\, u^{\ast}_{j+1}(t),\,\ldots,\, u^{\ast}_N(t)\bigr)\in U_{\mathcal{L}},   \label{EQ25}
\end{align}
where $u^{\ast}_i(t) = \mathcal{L}_i^{\ast}(t) x^j(t)$ for $i \in \mathcal{N}_{\neg j} \triangleq \mathcal{N} \backslash \{j\}$ and $j \in \mathcal{N}$. 

Furthermore, we may require that the control input for the $j$th\,-\,channel to be $u_j(t)=\bigl(\mathcal{L}_jx^j\bigr)(t) \in U_j$ and with this set of linear feedback operators
\begin{align*}
\underbrace{\bigl(\mathcal{L}_1^{\ast},\,\ldots,\, \mathcal{L}_{j-1}^{\ast},\,\mathcal{L}_{j},\,\mathcal{L}_{j+1}^{\ast},\, \ldots,\,\mathcal{L}_N^{\ast}\bigr)}_{ \triangleq \bigl(\mathcal{L}_j, \mathcal{L}_{\neg j}^{\ast}\bigr)} \in \mathscr{L}.
\end{align*}
Then, the unique solution $\phi_j\bigl(t; t_0, x_0, \bigl(\widehat{u_j(t), u^{\ast}_{\neg j}(t)}\bigr)\bigr)$ will take the form
\begin{align}
 \phi_j\bigl(t; t_0, x_0, \bigl(\widehat{u_j(t), u^{\ast}_{\neg j}(t)}\bigr)\bigr) = \underbrace{\Phi^{\mathcal{L}_{\neg j}^{\ast}}(t, t_0)\, \Phi^{\mathcal{L}_j}(t, t_0)}_{\triangleq \Phi_t^{(\mathcal{L}_j, \mathcal{L}_{\neg j}^{\ast})}} x_0, ~~ \forall t \in [t_0,\,+\infty),  \label{EQ26}
\end{align}
where
\begin{align}
 \frac{\partial\Phi^{\mathcal{L}_{\neg j}^{\ast}}(t, \tau)}{\partial t} &= \Bigl(A(t) + \sum\nolimits_{i \in \mathcal{N}_{\neg j}} B_i(t) \mathcal{L}_i^{\ast}(t) \Bigr)\Phi^{\mathcal{L}_{\neg j}^{\ast}}(t, \tau),  \label{EQ27} \\
 \frac{\partial\Phi^{\mathcal{L}_j}(t, \tau)}{\partial t} &= B_j^{\ast}(t)\Phi^{\mathcal{L}_j}(t, \tau),   \label{EQ28}
\end{align}
with both $\Phi^{\mathcal{L}_{\neg j}^{\ast}}(\tau, \tau)$ and $\Phi^{\mathcal{L}_j}(\tau, \tau)$ are identity matrices; and $B^{\ast}(t)$ is given by
\begin{align}
 B_j^{\ast}(t) = \Bigl(\Phi^{\mathcal{L}_{\neg j}^{\ast}}(t, \tau)\Bigr)^{-1} B_j(t)\mathcal{L}_j(t) \Phi^{\mathcal{L}_{\neg j}^{\ast}}(t, \tau),  \label{EQ29}
 \end{align}
for each $j \in \mathcal{N}$.\footnote{Note that, with $t_0 = \tau$, if we take the partial derivative of $\Phi_t^{(\mathcal{L}_j,\,\mathcal{L}_{\neg j}^{\ast})}$ with respect to $t$ and make use of Equations~\eqref{EQ6} and \eqref{EQ7} together with Equation~\eqref{EQ8}, then we have
\begin{align*}
\frac{\partial}{\partial t}\Bigl(\Phi_t^{(\mathcal{L}_j,\,\mathcal{L}_{\neg j}^{\ast})}\Bigr) &= \frac{\partial}{\partial t}\Bigl(\Phi^{\mathcal{L}_{\neg j}^{\ast}}(t, \tau)\Bigr)\,\Phi^{\mathcal{L}_j}(t, \tau) + \Phi^{\mathcal{L}_{\neg j}^{\ast}}(t, \tau)\, \frac{\partial}{\partial t}\Bigl(\Phi^{\mathcal{L}_j}(t, \tau))\Bigr),  \\
&= \Bigl(A(t) + \sum\nolimits_{i \in \mathcal{N}_{\neg j}} B_i(t) \mathcal{L}_i^{\ast}(t) \Bigr)\Phi^{\mathcal{L}_{\neg j}^{\ast}}(t, \tau)\, \Phi^{\mathcal{L}_j}(t, \tau) \\
& \quad\quad\quad \quad + \Phi^{\mathcal{L}_{\neg j}^{\ast}}(t, \tau) \Bigl(\Phi^{\mathcal{L}_{\neg j}^{\ast}}(t, \tau)\Bigr)^{-1} B_j(t)\mathcal{L}_j(t)\,\Phi^{\mathcal{L}_{\neg j}^{\ast}}(t, \tau) \,\Phi^{\mathcal{L}_j}(t, \tau)),  \\
&= \left(\Bigl(A(t) + \sum\nolimits_{i \in \mathcal{N}_{\neg j}} B_i(t) \mathcal{L}_i^{\ast}(t) \Bigr) +  B_j(t)\mathcal{L}_j(t) \right) \,\Phi_t^{(\mathcal{L}_j,\,\mathcal{L}_{\neg j}^{\ast})}.
\end{align*}
Moreover, we note that $\Phi^{\mathcal{L}_{\neg j}^{\ast}}(t, \tau)$ satisfies the following
\begin{align*}
 \Phi^{\mathcal{L}_{\neg j}^{\ast}}(t_2, t_1)\, \Phi^{\mathcal{L}_{\neg j}^{\ast}}(t_1, \tau) = \Phi^{\mathcal{L}_{\neg j}^{\ast}}(t_2, \tau), \quad \forall t_1, t_2 \in [\tau,\,+\infty),
\end{align*}
(e.g., see \cite{Che62} for such a decomposition that arises in differential equations).}

In the following, we assume that $X$ is a topological Hausdorff space and $\mathscr{A}$ is a $\sigma$\,-\,algebra of Borel set, i.e., the smallest $\sigma$\,-\,algebra which contains all open, and thus closed, subsets of $X$. With this, for any $t \ge 0$ (assuming that $t_0=0)$ and $(\mathcal{L}_j, \mathcal{L}_{\neg j}^{\ast})\in \mathscr{L}$, we can consider a family of continuous mappings (or transformations) $\Bigl\{\Phi_t^{(\mathcal{L}_j, \mathcal{L}_{\neg j}^{\ast})} \Bigr\}_{t \ge 0}$
on $X$ satisfying
\begin{align}
 (\mathbb{R}_{+} \times X) \ni (t,\,x) \mapsto \Phi_t^{(\mathcal{L}_j, \mathcal{L}_{\neg j}^{\ast})} x \in X.  \label{EQ30} 
\end{align}
Note that, for each fixed $t \ge 0$, the transformation $\Phi_t^{(\mathcal{L}_j, \mathcal{L}_{\neg j}^{\ast})}$ is measurable, i.e., we have $\Bigl(\Phi_t^{(\mathcal{L}_j, \mathcal{L}_{\neg j}^{\ast})}\Bigr)^{-1}(A) \in \mathscr{A}$ for all $A \in \mathcal{A}$, where $\Bigl(\Phi_t^{(\mathcal{L}_j, \mathcal{L}_{\neg j}^{\ast})}\Bigr)^{-1}(A)$ denotes the set of all $x$ such that $\Phi_t^{(\mathcal{L}_j, \mathcal{L}_{\neg j}^{\ast})} x \in A$ (e.g., see \cite{LasPia77} for invariant measures on topological spaces; see also \cite{BauSig75} for topological properties of measure spaces).

Then, we can introduce the following definitions.
\begin{definition}\label{DFN8}
A measure $\mu$ is called invariant under the family of measurable transformations $\Bigl\{\Phi_t^{(\mathcal{L}_j, \mathcal{L}_{\neg j}^{\ast})} \Bigr\}_{t \ge 0}$ if
\begin{align}
 \mu\Bigl(\Phi_t^{(\mathcal{L}_j, \mathcal{L}_{\neg j}^{\ast})} \bigr)^{-1}(A) = \mu\bigl(A\bigr), ~~ \forall A \in \mathcal{A}.  \label{EQ31}
\end{align}
\end{definition}

Next, we assume that the family of transformations $\Bigl\{\Phi_t^{(\mathcal{L}_j, \mathcal{L}_{\neg j}^{\ast})} \Bigr\}_{t \ge 0}$ are nonsingular and, for each fixed $t \ge 0$, the unique Frobenius-Perron operator $\mathit{P}_t^{(\mathcal{L}_j, \mathcal{L}_{\neg j}^{\ast})} \colon L^1(X, \mathscr{A}, \mu)\rightarrow L^1(X, \mathscr{A}, \mu)$ is then defined by 
\begin{align}
 \int_{A} \mathit{P}_t^{(\mathcal{L}_j, \mathcal{L}_{\neg j}^{\ast})} \vartheta(x) \mu(dx) =  \int_{\Bigl(\Phi_t^{(\mathcal{L}_j, \mathcal{L}_{\neg j}^{\ast})} \bigr)^{-1}(A) } \vartheta(x) \mu(dx), ~~ \forall A \in \mathscr{A}.  \label{EQ32}
 \end{align}

\begin{definition}\label{DFN9}
Let $(X, \mathscr{A}, \mu)$ be a measure space, then the family of operators $\Bigl\{\mathit{P}_t^{(\mathcal{L}_j, \mathcal{L}_{\neg j}^{\ast})}\Bigr\}_{t \ge 0}$, $\forall (\mathcal{L}_j, \mathcal{L}_{\neg j}^{\ast}) \in \mathscr{L}$, $\forall t \ge 0$ and $\forall j \in \mathcal{N}$, satisfies the following properties.
\begin{enumerate}[(P1)]
\item For all $\vartheta_1(x), \vartheta_2(x) \in L^1(X, \mathscr{A}, \mu)$ and $\lambda_1, \lambda_2 \in \mathbb{R}$ 
\begin{align*}
 \mathit{P}_t^{(\mathcal{L}_j, \mathcal{L}_{\neg j}^{\ast})} \Bigl\{\lambda_1 \vartheta_1(x) + \lambda_2 \vartheta_2(x) \Bigr\}= \lambda_1 \mathit{P}_t^{(\mathcal{L}_j, \mathcal{L}_{\neg j}^{\ast})} \vartheta_1(x) + \lambda_2  \mathit{P}_t^{(\mathcal{L}_j, \mathcal{L}_{\neg j}^{\ast})} \vartheta_2(x). 
\end{align*}
\item $\mathit{P}_t^{(\mathcal{L}_j, \mathcal{L}_{\neg j}^{\ast})} \vartheta(x) \ge 0$ if $\vartheta(x) \ge 0$.
\item For all $\vartheta(x) \in L^1(X, \mathscr{A}, \mu)$,
\begin{align*}
 \int_{X} \mathit{P}_t^{(\mathcal{L}_j^{\ast}, \mathcal{L}_{\neg j}^{\ast})} \vartheta(x) \mu(dx) = \int_{X} \vartheta(x)\mu(dx),
\end{align*}
\end{enumerate}
is called a semigroup; and it is uniformly continuous, if 
\begin{align}
 \lim_{t \rightarrow  t_0} \Bigl \Vert \mathit{P}_t^{(\mathcal{L}_j^{\ast}, \mathcal{L}_{\neg j}^{\ast})} \vartheta(x) -  \mathit{P}_{t_0}^{(\mathcal{L}_j^{\ast}, \mathcal{L}_{\neg j}^{\ast})} \vartheta(x) \Bigr\Vert_{L^1(X, \mathscr{A}, \mu)} = 0, ~~ \forall t_0 \ge 0,  \label{EQ33}
\end{align}
for each $\vartheta(x) \in L^1(X, \mathscr{A}, \mu)$ with $(\mathcal{L}_j^{\ast}, \mathcal{L}_{\neg j}^{\ast}) \in \mathscr{L}$.
\end{definition}

\section{Main Results} \label{S4}
\subsection{Game-theoretic formalism} \label{S4(1)}
In the following, we specify a game in a feedback strategic form -- where, in the course of the game, each feedback operator generates automatically a feedback control in response to the action of other feedback operators via the system state $x(t)$ for $t \in [t_0,\,+\infty)$. For example, the $j$th\,-\,feedback operator can generate a feedback control $u_j(t)=\bigl(\mathcal{L}_jx^j\bigr)(t)$ in response to the actions of other feedback operators $u^{\ast}_i(t)= \bigl(\mathcal{L}_i^{\ast} x^j\bigr)(t)$ for $i \in \mathcal{N}_{\neg j}$, where $\bigl(\widehat{u_j(t), u^{\ast}_{\neg j}(t)}\bigr) \in  U_{\mathcal{L}}$, and, similarly, any number of feedback operators can decide on to play feedback strategies simultaneously. Hence, for such a game to have a set of stable (game-theoretic) equilibrium feedback operators (which is also robust to small perturbations in the system or strategies played by others), then each feedback operator is required to respond (in some sense of best-response correspondence) to the others strategies.

To this end, it will be useful to consider the following criterion functions
\begin{align}
\mathscr{L} \ni (\mathcal{L}_j, \mathcal{L}_{\neg j}^{\ast}) \mapsto \mathcal{H}_{\rm r}\Bigr(\mathit{P}_t^{(\mathcal{L}_j, \mathcal{L}_{\neg j}^{\ast})}\vartheta(x)\,\bigl\lvert\,\vartheta(x) \Bigl) \in \mathbb{R}_{-} \cup \{-\infty\}, ~~ \forall t\ge 0, ~~ \forall j \in \mathcal{N}, \label{EQ34} 
\end{align}
over the class of admissible control functions $U_{\mathcal{L}}$ (or the set of linear feedback operators $\mathscr{L}$) and for any $\vartheta(x) \in D(X, \mathscr{A}, \mu)$.

Note that, under the game-theoretic framework, if there exists a set of equilibrium feedback operators $\bigl(\mathcal{L}_1^{\ast},\, \mathcal{L}_2^{\ast},\, \ldots,\, \mathcal{L}_N^{\ast}\bigr)$ (from the class of linear feedback operators $\mathscr{L}$). Then, this set of equilibrium feedback operators decreases the relative entropy between any two density functions from $D(X, \mathscr{A}, \mu)$ for all $t \ge 0$. On the other hand, if there exists a unique stationary density function (i.e., a common fixed-point) $\vartheta_{\ast}(x) \in D(X, \mathscr{A}, \mu)$ for the family of Frobenius-Perron operators $\mathit{P}_t^{(\mathcal{L}_j^{\ast}, \mathcal{L}_{\neg j}^{\ast})}$ for each fixed $t \ge 0$. Then, the composition of the multi-channel system with this set of equilibrium feedback operators, when described by density functions, will evolve towards the unique equilibrium state, i.e., the entropy of the whole system will be maximized (see Lanford \cite[pp\,1--113]{Lan73} for an exposition of equilibrium states and entropy in statistical mechanics).

Therefore, more formally, we have the following definition for the set of equilibrium feedback operators $(\mathcal{L}_1^{\ast},\, \mathcal{L}_2^{\ast}, \, \ldots, \, \mathcal{L}_N^{\ast}) \in \mathscr{L}$.

\begin{definition}\label{DFN10}
We shall say that a set of linear system operators $(\mathcal{L}_1^{\ast},\, \mathcal{L}_2^{\ast}, \, \ldots, \, \mathcal{L}_N^{\ast}) \in \mathscr{L}$ is a set of (game-theoretic) equilibrium feedback operators, if they produce control responses given by
\begin{align}
 u_j^{\ast}(t) = \bigl(\mathcal{L}_j^{\ast}x\bigr)(t) \in U_j, ~~ \forall j \in \mathcal{N}, \label{EQ35}
\end{align}
 for $t \in [0, \infty]$ and satisfy further the following conditions
\begin{align}
\left.\begin{array}{r}
\mathcal{H}_{\rm r}\Bigl(\mathit{P}_t^{(\mathcal{L}_j, \mathcal{L}_{\neg j}^{\ast})}\vartheta(x)\,\bigl\lvert\,\vartheta(x) \Bigr) \ge \mathcal{H}_{\rm r}\Bigl(\mathit{P}_t^{(\mathcal{L}_j^{\ast}, \mathcal{L}_{\neg j}^{\ast})}\vartheta(x)\,\bigl\lvert\,\vartheta(x) \Bigr), \quad \forall t \ge 0, ~~ \\
  ~~~~ \forall (\mathcal{L}_j, \mathcal{L}_{\neg j}^{\ast}) \in \mathscr{L}, ~ \forall j \in \mathcal{N}, ~~ \\  \\
 D(X, \mathscr{A}, \mu)\ni \mathit{P}_t^{(\mathcal{L}_j^{\ast}, \mathcal{L}_{\neg j}^{\ast})}\vartheta(x) \rightarrow \vartheta_{\ast}(x) \in D(X, \mathscr{A}, \mu) \quad  \text{as} \quad t \rightarrow \infty, ~~ \\ \\
 \mathcal{H}\Bigl(\mathit{P}_t^{(\mathcal{L}_j, \mathcal{L}_{\neg j}^{\ast})}\vartheta(x)\Bigr) \le \mathcal{H}\Bigl(\mathit{P}_t^{(\mathcal{L}_j^{\ast}, \mathcal{L}_{\neg j}^{\ast})}\vartheta_{\ast}(x)\Bigr), \quad  \forall t \ge 0, \quad \forall j \in \mathcal{N}, ~~
 \end{array} \right \} \label{EQ36}
\end{align}
for each $\vartheta(x) \in D(X, \mathscr{A}, \mu)$.
\end{definition}

\begin{remark}
We remark that the relative entropy $\mathcal{H}_{\rm r}(\cdot \lvert \cdot)$ in Equation~\eqref{EQ36} is determined with respect to $\mathcal{L}_j$ for each $j \in \mathcal{N}$; while the others $L_{\neg j}^{\ast}$ remain fixed.
\end{remark}

Then, we formally state the main objective of this paper.
\begin{problem}
Provide a sufficient condition for the existence of a set of equilibrium feedback operators in the multi-channel system (that interacts strategically in a game-theoretic framework) such that when the composition of the multi-channel system with this set of equilibrium feedback operators, described by density functions, will evolve towards an equilibrium state in such a way that the entropy of the whole system is maximized.
\end{problem}

\subsection{Existence of a set of (game-theoretic) equilibrium feedback operators}\label{S4(2)}

In the following, we provide a sufficient condition for the existence of a unique equilibrium state that is associated with a stationary density function (i.e., a common fixed-point) for the family of Frobenius-Perron operators $\Bigl\{\mathit{P}_t^{(\mathcal{L}_j^{\ast}, \mathcal{L}_{\neg j}^{\ast})}\Bigr\}_{t \ge 0}$.
\begin{proposition}\label{PR2}
Let $B(X, \mathscr{A}, \mu)$ be an open ball in $D(X, \mathscr{A}, \mu)$ of center $\vartheta_0(x) \in D(X, \mathscr{A}, \mu)$ and radius $\beta$, i.e,,
\begin{align}
 B(X, \mathscr{A}, \mu) = \Bigl \{ \vartheta(x) \in D(X, \mathscr{A}, \mu) \, \Bigl \vert \, \bigl\lVert \vartheta(x) - \vartheta_0(x) \bigr\rVert_{L^1(X, \mathscr{A}, \mu)} \le \beta \Bigr\}. \label{EQ37} 
\end{align}
Suppose that there exists a set of feedback operators $\bigr(\mathcal{L}_1^{\ast}, \mathcal{L}_2^{\ast}, \ldots, \mathcal{L}_N^{\ast} \bigl)\,\in\negthinspace\mathscr{L}$ such that the family of Frobenius-Perron operators $\Bigr\{\mathit{P}_t^{(\mathcal{L}_j, \mathcal{L}_{\neg j}^{\ast})} \Bigl\}_{t \ge 0}$ with respect to $\Phi_t^{(\mathcal{L}_j, \mathcal{L}_{\neg j}^{\ast})}$ satisfies
\begin{align}
\sup_{(\mathcal{L}_j, \mathcal{L}_{\neg j}^{\ast}) \in \mathscr{L}} \Bigl \Vert \mathit{P}_t^{(\mathcal{L}_j, \mathcal{L}_{\neg j}^{\ast})} \vartheta_2(x) &- \mathit{P}_t^{(\mathcal{L}_j, \mathcal{L}_{\neg j}^{\ast})} \vartheta_1(x) \Bigr\Vert_{L^1(X, \mathscr{A}, \mu)} \notag \\
&\le \kappa \bigl \Vert \vartheta_2(x) - \vartheta_1(x) \bigr\Vert_{L^1(X, \mathscr{A}, \mu)}, ~ \forall t \ge 0, ~ \forall j \in \mathcal{N}, \label{EQ38} 
\end{align}
for any two $\vartheta_1(x), \vartheta_2(x) \in B(X, \mathscr{A}, \mu)$, where $\kappa$ is a positive constant which is less than one.  

Then, if 
\begin{align}
\sup_{(\mathcal{L}_j, \mathcal{L}_{\neg j}^{\ast}) \in \mathscr{L}} \Bigl \Vert \mathit{P}_t^{(\mathcal{L}_j, \mathcal{L}_{\neg j}^{\ast})} \vartheta_0(x) - \vartheta_0(x) \Bigr\Vert_{L^1(X, \mathscr{A}, \mu)} \le \beta\bigl(1 - \kappa \bigr), ~~  \forall t \ge 0, ~~  \forall j \in \mathcal{N}, \label{EQ39} 
\end{align}
there is at least one stationary density function (i.e., a common fixed-point) ~ $\vartheta_{\ast}(x) \in B(X, \mathscr{A}, \mu)$ such that
\begin{align}
 \mathit{P}_t^{(\mathcal{L}_j^{\ast}, \mathcal{L}_{\neg j}^{\ast})} \vartheta_{\ast}(x) = \vartheta_{\ast}(x), ~~ \forall t \ge 0, ~ \forall j \in \mathcal{N}. \label{EQ40}
 \end{align}
Furthermore, there exists a unique equilibrium state, which corresponds with $\vartheta_{\ast}(x)$, if the measure $\mu_{\ast}$
\begin{align}
  \mu_{\ast}(A) = \int_{A} \vartheta_{\ast}(x) \mu(dx), ~~ \forall A \in \mathscr{A}, \label{EQ41} 
\end{align}
is invariant with respect to $\Phi_t^{(\mathcal{L}_j^{\ast}, \mathcal{L}_{\neg j}^{\ast})}$ for each fixed $t \ge 0$.\footnote{Note that the supremum in Equation~\eqref{EQ38} (and also in Equation~\eqref{EQ39}) is computed with respect to $\mathcal{L}_j$ with $(\mathcal{L}_j, \mathcal{L}_{\neg j}^{\ast}) \in \mathscr{L}$ for each $j \in \mathcal{N}$, while $\mathcal{L}_{\neg j}^{\ast}$ remains fixed.}
\end{proposition}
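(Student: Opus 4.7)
The overall strategy is to apply the Banach Contraction Mapping Theorem uniformly over $t \ge 0$ and $j \in \mathcal{N}$, and then invoke Theorem~\ref{TH1} to translate the fixed-point statement into one about invariance of the measure $\mu_{\ast}$. The ball $B(X, \mathscr{A}, \mu)$, defined by a closed $L^{1}$-inequality, inherits completeness from $L^{1}(X, \mathscr{A}, \mu)$ (the set $D(X, \mathscr{A}, \mu)$ is itself $L^{1}$-closed, being the intersection of a closed positive cone with a closed affine hyperplane), so it is a complete metric space in its own right.

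First I would verify the self-mapping property: for any $\vartheta(x) \in B(X, \mathscr{A}, \mu)$ and any $(\mathcal{L}_j, \mathcal{L}_{\neg j}^{\ast}) \in \mathscr{L}$, the triangle inequality together with \eqref{EQ38} and \eqref{EQ39} gives
\begin{align*}
\bigl\| \mathit{P}_t^{(\mathcal{L}_j, \mathcal{L}_{\neg j}^{\ast})} \vartheta - \vartheta_0 \bigr\|_{L^{1}} \le \kappa \, \bigl\|\vartheta - \vartheta_0 \bigr\|_{L^{1}} + \beta(1-\kappa) \le \kappa\beta + \beta(1-\kappa) = \beta,
\end{align*}
so each $\mathit{P}_t^{(\mathcal{L}_j, \mathcal{L}_{\neg j}^{\ast})}$ sends $B$ into itself. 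Since \eqref{EQ38} supplies a uniform contraction constant $\kappa < 1$ valid for every $t \ge 0$ and every $j$, Banach's theorem produces a unique fixed point $\vartheta_{\ast}^{(t,j)}(x) \in B$ for each pair $(t, j)$.

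Next I would identify this collection of fixed points with a single $\vartheta_{\ast}(x)$. Independence of $j$ is immediate from the footnote after \eqref{EQ29}: when $\mathcal{L}_j = \mathcal{L}_j^{\ast}$ the generator of the closed loop reduces to $A(t) + \sum_{i \in \mathcal{N}} B_i(t) \mathcal{L}_i^{\ast}(t)$ regardless of which index is singled out, so $\Phi_t^{(\mathcal{L}_j^{\ast}, \mathcal{L}_{\neg j}^{\ast})}$ and hence $\mathit{P}_t^{(\mathcal{L}_j^{\ast}, \mathcal{L}_{\neg j}^{\ast})}$ do not depend on $j$. Independence of $t$ then follows from the semigroup structure of Definition~\ref{DFN9}: writing $\mathit{P}_t := \mathit{P}_t^{(\mathcal{L}_j^{\ast}, \mathcal{L}_{\neg j}^{\ast})}$, for any $s, t \ge 0$ the identity $\mathit{P}_s (\mathit{P}_t \vartheta_{\ast}^{(s)}) = \mathit{P}_t (\mathit{P}_s \vartheta_{\ast}^{(s)}) = \mathit{P}_t \vartheta_{\ast}^{(s)}$ shows that $\mathit{P}_t \vartheta_{\ast}^{(s)}$ is another fixed point of $\mathit{P}_s$ inside $B$, and uniqueness within $B$ forces $\mathit{P}_t \vartheta_{\ast}^{(s)} = \vartheta_{\ast}^{(s)}$; hence $\vartheta_{\ast}^{(s)}$ is a common stationary density, which I would designate $\vartheta_{\ast}(x)$.

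For the ``furthermore'' clause, Theorem~\ref{TH1} applied to $\mathit{P}_t^{(\mathcal{L}_j^{\ast}, \mathcal{L}_{\neg j}^{\ast})}$ tells us that stationarity of $\vartheta_{\ast}(x)$ is equivalent to invariance of the measure $\mu_{\ast}$ defined by \eqref{EQ41}, and uniqueness of the associated equilibrium state follows from Theorem~\ref{TH5} together with positivity of $\vartheta_{\ast}(x)$, since a positive stationary density forces ergodicity of $\Phi_t^{(\mathcal{L}_j^{\ast}, \mathcal{L}_{\neg j}^{\ast})}$ and therefore pins down the density uniquely. The principal obstacle I anticipate is the passage from per-$t$ fixed points to a single common fixed point, because in the time-varying linear setting the transition operators form a two-parameter cocycle rather than a one-parameter group, so the commutation $\mathit{P}_t \mathit{P}_s = \mathit{P}_s \mathit{P}_t$ is not automatic. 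I would lean on Definition~\ref{DFN9} (and the uniform continuity property \eqref{EQ33}) to treat $\{\mathit{P}_t\}_{t \ge 0}$ as a genuine semigroup; a fallback, if that is unsatisfactory, is to approximate $\vartheta_{\ast}^{(t)}$ as the strong $L^{1}$-limit of the Picard iterates $(\mathit{P}_t)^{n} \vartheta_0$ and exploit uniformity of the contraction constant $\kappa$ to show that these iterates converge to the same limit regardless of $t$.
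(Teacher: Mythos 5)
Your proposal is correct and follows essentially the same route as the paper: the paper's proof is exactly the Picard-iteration form of the contraction mapping theorem on the ball $B(X,\mathscr{A},\mu)$ (it builds the iterates $\vartheta_n$, bounds the increments by $\kappa^{n-1}\Vert\vartheta_1\Vert_{L^1}$, sums the geometric series against $\beta(1-\kappa)$ to keep the iterates in the ball, and passes to the limit), followed by an appeal to Theorem~\ref{TH1} for the invariance of $\mu_\ast$; a remark after the proposition confirms that a ball-version fixed-point theorem (Dunford--Schwartz, Dieudonn\'e) is what is ``implicitly used.'' Where you go beyond the paper is in making the self-mapping estimate explicit and, more importantly, in addressing why the per-$(t,j)$ fixed points coincide: the paper simply asserts a \emph{common} fixed point after running the iteration for a generic fixed $t$, whereas you supply the $j$-independence of $\Phi_t^{(\mathcal{L}_j^{\ast},\mathcal{L}_{\neg j}^{\ast})}$ and the semigroup commutation argument for $t$-independence --- a genuine gap in the published proof that your argument closes (modulo granting the semigroup property of Definition~\ref{DFN9}, which for time-varying $A(t),B_j(t)$ is an assumption rather than a fact, as you rightly flag). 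Two small caveats: your fallback for $t$-independence does not work as stated, since a uniform contraction constant and a common starting point do not force distinct contractions to share a limit; and in the ``furthermore'' clause the appeal to Theorem~\ref{TH5} needs $\vartheta_\ast(x)>0$ a.e., which has not been established --- but the paper itself only uses Theorem~\ref{TH1} there, so nothing essential is lost.
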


\begin{proof}
Observe that $\mathit{P}_t^{(\mathcal{L}_j, \mathcal{L}_{\neg j}^{\ast})}$ is continuous for each $t \ge 0$ and for any $\vartheta(x) \in D(X, \mathscr{A}, \mu)$ (cf. Equation~\eqref{EQ33}). For $\vartheta_{\ast}(x) \in B(X, \mathscr{A}, \mu)$, we will show that there exists a convergent sequence of functions $\bigl\{ \vartheta_n(x)\bigr\}$ such that
\begin{align*}
 B(X, \mathscr{A}, \mu) \ni  \vartheta_n(x) = \mathit{P}_t^{(\mathcal{L}_j, \mathcal{L}_{\neg j}^{\ast})}\vartheta_{n-1}(x) - \vartheta_0(x),  ~ \forall n \ge 0, ~ \forall j \in \mathcal{N}, ~ \forall (\mathcal{L}_j, \mathcal{L}_{\neg j}^{\ast}) \in \mathscr{L},
\end{align*}
We see that if $\vartheta_p(x)$ is defined in $B(X, \mathscr{A}, \mu)$ for $1 \le p \le n$, i.e., $\mathit{P}_t^{(\mathcal{L}_j, \mathcal{L}_{\neg j}^{\ast})}\vartheta_{p-1}(x) -\vartheta_0(x) \in B(X, \mathscr{A}, \mu)$, $\forall p \in [1, n]$, then we have followings
\begin{align*}
  \vartheta_p(x) - \vartheta_{p-1}(x) = \mathit{P}_t^{(\mathcal{L}_j, \mathcal{L}_{\neg j}^{\ast})}\vartheta_{p-1}(x) - \mathit{P}_t^{(\mathcal{L}_j, \mathcal{L}_{\neg j}^{\ast})}\vartheta_{p-2}(x).
\end{align*}
and 
\begin{align*}
 &\bigl \Vert \vartheta_p(x) - \vartheta_{p-1}(x) \bigr\Vert_{L^1(X, \mathscr{A}, \mu)} \notag \\
  &\quad\quad\quad \le \kappa \sup_{(\mathcal{L}_j, \mathcal{L}_{\neg j}^{\ast}) \in \mathscr{L}}\bigl\Vert \mathit{P}_t^{(\mathcal{L}_j, \mathcal{L}_{\neg j}^{\ast})}\vartheta_{p-1}(x) - \mathit{P}_t^{(\mathcal{L}_j, \mathcal{L}_{\neg j}^{\ast})}\vartheta_{p-2}(x)\bigr \Vert_{L^1(X, \mathscr{A}, \mu)}, \notag \\
  &\quad\quad\quad\quad\quad\quad\quad\quad ~~ ~ \forall t \ge 0, ~ \forall j \in \mathcal{N}.
\end{align*}
With $(\mathcal{L}_j^{\ast}, \mathcal{L}_{\neg j}^{\ast}) \in \mathscr{L}$, we conclude that
\begin{align*}
 \bigl \Vert \vartheta_p(x) - \vartheta_{p-1}(x) \bigr\Vert_{L^1(X, \mathscr{A}, \mu)} \le \kappa^{p-1}\bigl\Vert\vartheta_1(x)\bigr \Vert_{L^1(X, \mathscr{A}, \mu)},
\end{align*}
which further gives us
\begin{align*}
 \bigl \Vert \vartheta_p(x)\bigr\Vert_{L^1(X, \mathscr{A}, \mu)} \le (1+ \kappa + \kappa^2+ \cdots + \kappa^{p-1})\bigl\Vert\vartheta_1(x)\bigr \Vert_{L^1(X, \mathscr{A}, \mu)},
\end{align*}
and
\begin{align*}
 \bigl \Vert \vartheta_p(x)\bigr\Vert_{L^1(X, \mathscr{A}, \mu)} \le \frac{1}{1- \kappa} \bigl\Vert\vartheta_1(x)\bigr \Vert_{L^1(X, \mathscr{A}, \mu)} < \beta.
\end{align*}
Hence, this agrees with our claim, i.e.,
\begin{align*}
\sup_{(\mathcal{L}_j, \mathcal{L}_{\neg j}^{\ast}) \in \mathscr{L}}\bigl\Vert \mathit{P}_t^{(\mathcal{L}_j, \mathcal{L}_{\neg j}^{\ast})}\vartheta_0(x) - \vartheta_0(x)\bigr \Vert_{L^1(X, \mathscr{A}, \mu)} < \beta(1- \kappa), ~~ \forall t \ge 0, ~ \forall j \in \mathcal{N}.
\end{align*}
Note that, for any $n \ge 0$, we have
\begin{align*}
 \bigl \Vert \vartheta_n(x) - \vartheta_{n-1}(x) \bigr\Vert_{L^1(X, \mathscr{A}, \mu)} \le \kappa^{n-1}\bigl\Vert\vartheta_1(x)\bigr \Vert_{L^1(X, \mathscr{A}, \mu)},
\end{align*}
which is strongly convergent (i.e., $\lim_{n \rightarrow \infty}  \bigl \Vert \vartheta_n(x) - \vartheta_{n-1}(x) \bigr\Vert_{L^1(X, \mathscr{A}, \mu)} = 0$). Then, by passing to a limit, we conclude that there exists a common fixed-point (or a stationary density function) $\vartheta_{\ast}(x) \in B(X, \mathscr{A}, \mu)$ for the family of Frobenius-Perron operators $\Bigl\{\mathit{P}_t^{(\mathcal{L}_j^{\ast}, \mathcal{L}_{\neg j}^{\ast})} \Bigr\}_{t \ge 0}$ that satisfies
\begin{align*}
 \mathit{P}_t^{(\mathcal{L}_j^{\ast}, \mathcal{L}_{\neg j}^{\ast})} \vartheta_{\ast}(x) = \vartheta_{\ast}(x), ~~ \forall t \ge 0,
\end{align*} 
which also corresponds to the unique equilibrium state, in the sense of statistical mechanics, for the multi-channel system together with $(\mathcal{L}_j^{\ast}, \mathcal{L}_{\neg j}^{\ast}) \in \mathscr{L}$.\footnote{\label{FT1}Note that the following also holds true
\begin{align*}
 \lim_{t \rightarrow \infty} \mathit{P}_t^{(\mathcal{L}_j^{\ast}, \mathcal{L}_{\neg j}^{\ast})} \vartheta(x) = \vartheta_{\ast}(x),
\end{align*}
for any $\vartheta(x) \in B(X, \mathscr{A}, \mu)$.}

Moreover, from Theorem~\ref{TH1}, we see that the measure $\mu_{\ast}$, i.e.,
\begin{align*}
  \mu_{\ast}(A) = \int_{A} \vartheta_{\ast}(x) \mu(dx), ~~ \forall A \in \mathscr{A},
\end{align*}
 is invariant with respect to $\Phi_t^{(\mathcal{L}_j^{\ast}, \mathcal{L}_{\neg j}^{\ast})}$ for each fixed $t \ge 0$.\qed
\end{proof}

The above proposition (i.e., Proposition~\ref{PR2}) is important because of the three way connection it draws between the existence of a common stationary density function $\vartheta_{\ast}(x) \in D(X, \mathscr{A}, \mu)$ for $\Bigl\{\mathit{P}_t^{(\mathcal{L}_j^{\ast}, \mathcal{L}_{\neg j}^{\ast})}\Bigr\}_{t \ge 0}$ (i.e., the unique equilibrium state), the invariant measure $\mu_{\ast}$ (i.e., the measure preserving property of $\Phi_t^{(\mathcal{L}_j^{\ast}, \mathcal{L}_{\neg j}^{\ast})}$ for all $t \ge 0$) and the set of equilibrium feedback operators $\bigr(\mathcal{L}_1^{\ast}, \mathcal{L}_2^{\ast}, \ldots, \mathcal{L}_N^{\ast} \bigl) \in \mathscr{L}$. Moreover, the corresponding maximum entropy $\mathcal{H}_{max} \bigl(\vartheta_{\ast}(x) \bigr)$ is given by
\begin{align*}
  \mathcal{H}_{\rm max} \bigl(\vartheta_{\ast}(x) \bigr) = -\int_{X} \vartheta_{\ast}(x) \ln \vartheta_{\ast}(x) \mu(dx).
\end{align*}

\begin{remark}
We remark that, in the above proposition, a fixed-point theorem is implicitly used for deriving a sufficient condition for the existence of a common stationary density function for the family of Frobenius-Perron operators (e.g., see Dunford and Schwartz \cite[pp\,456]{DunSch58} or Dieudonn\'{e} \cite[pp\,261]{Dieu60}).
\end{remark}

\subsection{Asymptotic stability of the family of Frobenius-Perron operators} \label{S4(3)}

Here, we provide a connection between the stationary density function $\vartheta_{\ast}(x)  \in D(X, \mathscr{A}, \mu)$ (which corresponds to the equilibrium state) and the asymptotic stability of the family of Frobenius-Perron operators $\Bigl\{\mathit{P}_t^{(\mathcal{L}_j^{\ast}, \mathcal{L}_{\neg j}^{\ast})} \Bigr\}_{t \ge 0}$. Note that, from Proposition~\ref{PR2}, any initial density function $\vartheta(x) \in D(X, \mathscr{A}, \mu)$ under the action of the family of Frobenius-Perron operators $\Bigl\{\mathit{P}_t^{(\mathcal{L}_j, \mathcal{L}_{\neg j}^{\ast})} \Bigr\}_{t \ge 0}$ will only converge to a unique stationary density function $\vartheta_{\ast}(x)  \in D(X, \mathscr{A}, \mu)$, if the relative entropy 
\begin{align}
\sup_{(\mathcal{L}_j, \mathcal{L}_{\neg j}^{\ast}) \in \mathscr{L}} \mathcal{H}_{\rm r}\Bigr(\mathit{P}_t^{(\mathcal{L}_j, \mathcal{L}_{\neg j}^{\ast})}\vartheta(x)\,\bigl\lvert\,\vartheta_{\ast}(x) \Bigl), ~~ \forall j \in \mathcal{N}, \label{EQ42} 
\end{align}
tends zero as $t \rightarrow \infty$, and when the set of feedback operators attains a (game-theoretic) equilibrium.

Then, we have the following corollary that exactly establishes the connection between the relative entropy and the stationary density function (where the latter corresponds to the unique equilibrium state).
\begin{corollary}\label{CR4}
Suppose that the set of equilibrium feedback operators $\bigl(\mathcal{L}_1^{\ast}, \mathcal{L}_2^{\ast}, \ldots, \mathcal{L}_N^{\ast} \bigr) \in \mathscr{L}$ satisfies Proposition~\ref{PR2}. Then, 
\begin{align}
 \lim_{t \rightarrow \infty} \mathcal{H}_{\rm r}\Bigl(\mathit{P}_t^{(\mathcal{L}_j^{\ast}, \mathcal{L}_{\neg j}^{\ast})}\vartheta(x)\,\bigl\lvert\,\vartheta_{\ast}(x) \Bigr) = 0, ~~ \forall j \in \mathcal{N}, \label{EQ43} 
\end{align}
for each $\vartheta(x) \in D(X, \mathscr{A}, \mu)$ such that $\mathcal{H}_{\rm r}\bigr(\vartheta(x)\,\bigl\lvert\,\vartheta_{\ast}(x) \bigl)$ is finite.
\end{corollary}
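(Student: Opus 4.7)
The plan is to combine the strong $L^1$-convergence supplied by Proposition~\ref{PR2} with a monotonicity property of the relative entropy along the Markov trajectory generated by the equilibrium feedback operators, and then to upgrade a one-sided semi-continuity inequality to equality via a uniform-integrability estimate.

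First, I would record two consequences of Proposition~\ref{PR2}: that $\vartheta_{\ast}(x)$ is a common fixed point of the family $\bigl\{\mathit{P}_t^{(\mathcal{L}_j^{\ast}, \mathcal{L}_{\neg j}^{\ast})}\bigr\}_{t\ge 0}$, and (cf.\ footnote~\ref{FT1}) that $\mathit{P}_t^{(\mathcal{L}_j^{\ast}, \mathcal{L}_{\neg j}^{\ast})} \vartheta(x) \to \vartheta_{\ast}(x)$ strongly in $L^1(X,\mathscr{A},\mu)$ as $t \to \infty$, for every $\vartheta(x)\in B(X,\mathscr{A},\mu)$. By passing to a subsequence $\{t_n\}$ one may additionally assume pointwise $\mu$-a.e.\ convergence.

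Next, properties (P1)--(P3) in Definition~\ref{DFN9} make each $\mathit{P}_t^{(\mathcal{L}_j^{\ast}, \mathcal{L}_{\neg j}^{\ast})}$ a Markov operator in the sense of the remark following Definition~\ref{DFN7}. Applying Voigt's lemma (with the second argument set to $\vartheta_{\ast}(x)$), and using both the semigroup property and the fixed-point identity $\mathit{P}_s^{(\mathcal{L}_j^{\ast}, \mathcal{L}_{\neg j}^{\ast})} \vartheta_{\ast}(x) = \vartheta_{\ast}(x)$, I would conclude that the scalar map $t \mapsto \mathcal{H}_{\rm r}\bigl(\mathit{P}_t^{(\mathcal{L}_j^{\ast}, \mathcal{L}_{\neg j}^{\ast})}\vartheta(x) \bigm\vert \vartheta_{\ast}(x)\bigr)$ is monotone in $t$, bounded between the Gibbs value $\mathcal{H}_{\rm r}(\vartheta_{\ast}\vert\vartheta_{\ast})=0$ and the finite initial value $\mathcal{H}_{\rm r}(\vartheta(x)\vert\vartheta_{\ast}(x))$, so that the limit $L \triangleq \lim_{t\to\infty} \mathcal{H}_{\rm r}\bigl(\mathit{P}_t^{(\mathcal{L}_j^{\ast}, \mathcal{L}_{\neg j}^{\ast})}\vartheta(x) \bigm\vert \vartheta_{\ast}(x)\bigr)$ exists and is finite.

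Finally, to identify $L=0$, I would invoke uniform integrability. The combination of Voigt's monotonicity and the finiteness of $\mathcal{H}_{\rm r}(\vartheta\vert\vartheta_{\ast})$ yields a uniform-in-$t$ $L^1$-bound on the integrands $\mathit{P}_t\vartheta \ln(\mathit{P}_t\vartheta/\vartheta_{\ast})$, and the de~la~Vall\'ee--Poussin criterion (with superlinear test function $s \mapsto s\ln^{+} s$) upgrades this to uniform integrability. Since $\mathit{P}_{t_n}\vartheta \to \vartheta_{\ast}$ $\mu$-a.e.\ and the map $s \mapsto s \ln(s/\vartheta_{\ast}(x))$ is continuous away from $s=0$, Vitali's convergence theorem lets me pass the limit inside the integral to obtain $L = \int_X \vartheta_{\ast}(x)\ln 1\,\mu(dx) = 0$; the monotonicity of the full family then forces the entire net, and not only the subsequence, to converge to $0$. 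The main obstacle is precisely this last step: lower semi-continuity of $\mathcal{H}_{\rm r}(\cdot\vert\vartheta_{\ast})$ only supplies $\liminf \mathcal{H}_{\rm r}(\mathit{P}_t\vartheta\vert\vartheta_{\ast}) \ge 0$, which is already clear from Gibbs' inequality, so the strong $L^1$-convergence alone is insufficient to identify $L$, and Voigt's monotonicity is indispensable for producing the uniform-integrability bound that reverses the semi-continuity inequality.
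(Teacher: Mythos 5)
Your overall strategy---using the fixed-point identity $\mathit{P}_s^{(\mathcal{L}_j^{\ast},\mathcal{L}_{\neg j}^{\ast})}\vartheta_{\ast}=\vartheta_{\ast}$ together with Voigt's lemma and the semigroup property to make $t\mapsto\mathcal{H}_{\rm r}\bigl(\mathit{P}_t^{(\mathcal{L}_j^{\ast},\mathcal{L}_{\neg j}^{\ast})}\vartheta\,\vert\,\vartheta_{\ast}\bigr)$ monotone and bounded, and then upgrading lower semicontinuity to genuine convergence---is the right one, and it is considerably more substantive than the paper's own proof, which merely restates the consequences of Proposition~\ref{PR2} (common fixed point, $\mathit{P}_t^{(\mathcal{L}_j^{\ast},\mathcal{L}_{\neg j}^{\ast})}\vartheta\to\vartheta_{\ast}$ strongly) and then asserts Equation~\eqref{EQ43}. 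You are also correct that this assertion is not automatic: relative entropy is only lower semicontinuous under $L^1$ convergence, so strong convergence by itself yields $\liminf_{t}\mathcal{H}_{\rm r}\ge 0$ and nothing more.

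The step that closes your argument, however, has a genuine gap. Voigt's monotonicity gives $\sup_{t}\int_X \mathit{P}_t\vartheta\,\ln^{+}\bigl(\mathit{P}_t\vartheta/\vartheta_{\ast}\bigr)\,\mu(dx)<\infty$, and feeding $G(s)=s\ln^{+}s$ into de la Vall\'ee--Poussin turns this into uniform integrability of the family $\bigl\{\mathit{P}_t\vartheta\bigr\}$ (equivalently, of the ratios $\mathit{P}_t\vartheta/\vartheta_{\ast}$ in $L^1(\mu_{\ast})$)---\emph{not} of the integrands $h_t=\mathit{P}_t\vartheta\ln\bigl(\mathit{P}_t\vartheta/\vartheta_{\ast}\bigr)$ that Vitali's theorem requires. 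A uniform $L^1$ bound on $\{h_t\}$ does not imply their uniform integrability: on $[0,1]$ with $\vartheta_{\ast}\equiv 1$, densities of the form $a_n$ off a set $A_n$ of measure $(ne^n)^{-1}$ and equal to $e^n$ on $A_n$ converge to $1$ in $L^1$ while their relative entropy stays near $1$ and $\int s\ln^{+}s$ stays bounded; so your estimates do not force the monotone limit $L$ to vanish. The standard repair (essentially how Lasota and Mackey handle the discrete-time analogue) is a truncation: first take $\vartheta\le c\,\vartheta_{\ast}$, for which positivity of the operators and the fixed-point identity give $\mathit{P}_t\vartheta\le c\,\mathit{P}_t\vartheta_{\ast}=c\,\vartheta_{\ast}$ uniformly in $t$, so the integrands are dominated by the integrable function $(c\ln c)\,\vartheta_{\ast}+\vartheta_{\ast}/e$ and dominated convergence along your a.e.-convergent subsequence applies; then extend to general $\vartheta$ with $\mathcal{H}_{\rm r}(\vartheta\,\vert\,\vartheta_{\ast})<\infty$ by approximation, using monotonicity and convexity to control the error uniformly in $t$. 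Finally, before invoking Voigt's lemma you should fix the sign convention: the inequality as printed in the paper's footnote ($\ge$) is consistent with Equation~\eqref{EQ34} but not with Definition~\ref{DFN7}, where $\mathcal{H}_{\rm r}$ is the nonnegative Kullback--Leibler divergence and the data-processing inequality runs the other way.
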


\begin{proof}
From Proposition~\ref{PR2}, if $\bigl(\mathcal{L}_1^{\ast}, \mathcal{L}_2^{\ast}, \ldots, \mathcal{L}_N^{\ast} \bigr) \in \mathscr{L}$ is a set of equilibrium feedback operators. Then, there is a common fixed-point density function $\vartheta_{\ast}(x) \in D(X, \mathscr{A}, \mu)$ such that
\begin{align*}
 \vartheta_{\ast}(x) \in \bigcap_{t \ge 0} \overline{\mathit{P}_t^{(\mathcal{L}_j, \mathcal{L}_{\neg j}^{\ast})}\vartheta(x)} \neq \varnothing, ~~ \forall (\mathcal{L}_j, \mathcal{L}_{\neg j}^{\ast}) \in \mathscr{L}, ~~ \forall j \in \mathcal{N}, \\
  \forall \vartheta(x) \in D(X, \mathscr{A}, \mu),
\end{align*}
and
\begin{align*}
 \mathit{P}_t^{(\mathcal{L}_j^{\ast}, \mathcal{L}_{\neg j}^{\ast})}\vartheta_{\ast}(x) = \vartheta_{\ast}(x), ~~ \forall t \ge 0,
\end{align*}
with (cf. Equation~\eqref{EQ36} or Footnote~\ref{FT1})
\begin{align*}
 \lim_{t \rightarrow \infty} \mathit{P}_t^{(\mathcal{L}_j^{\ast}, \mathcal{L}_{\neg j}^{\ast})}\vartheta(x) = \vartheta_{\ast}(x), ~~ \forall \vartheta(x) \in D(X, \mathscr{A}, \mu).
\end{align*}
Further, if $\mathcal{H}_{\rm r}\bigr(\vartheta(x)\,\bigl\lvert\,\vartheta_{\ast}(x) \bigl)$ is finite, then we have
\begin{align*}
\mathcal{H}_{\rm r}\Bigl(\mathit{P}_t^{(\mathcal{L}_j^{\ast}, \mathcal{L}_{\neg j}^{\ast})}\vartheta(x)\,\bigl\lvert\,\vartheta_{\ast}(x) \Bigr) \rightarrow 0 \quad  \text{as} \quad t \rightarrow \infty,
\end{align*}
for any $ \vartheta(x) \in D(X, \mathscr{A}, \mu)$.\qed
\end{proof}

\subsection{Resilient behavior of a set of (game-theoretic) equilibrium feedback operators} \label{S4(4)}
Here, we consider the following systems with a small random perturbation term
\begin{align} 
 d Z_{\epsilon}^j(t) = \Bigl(A(t) +  \sum\nolimits_{i \in \mathcal{N}_{\neg j}} B_i(t) \mathcal{L}_i^{\ast}(t) \Bigr)Z_{\epsilon}^j(t) dt  + B_j(t)\mathcal{L}_i(t) Z_{\epsilon}^j(t) dt \notag \\
   \quad + \sqrt{\epsilon} \,\sigma(t, Z_{\epsilon}^j(t))\,d W(t), ~~ Z_{\epsilon}^j(0) = x_0, \notag \\
   ~(\mathcal{L}_j, \mathcal{L}_{\neg j}^{\ast}) \in \mathscr{L}, ~ j \in \mathcal{N}, \label{EQ44} 
 \end{align}
where $\sigma(t, Z_{\epsilon}^j(t)) \in \mathbb{R}^{d \times d}$ is a diffusion term, $W(t)$ is a $d$-dimensional Wiener process and $\epsilon$ is a small positive number, which represents the level of perturbation in the system. Note that we assume here there exists a set of equilibrium feedback operators $\bigl(\mathcal{L}_1^{\ast}, \mathcal{L}_2^{\ast}, \ldots, \mathcal{L}_N^{\ast} \bigr) \in \mathscr{L}$, when $\epsilon = 0$ (which corresponds to the unperturbed multi-channel system). Then, we investigate, as $\epsilon \rightarrow 0$, the asymptotic stability behavior of an invariant measure for the family of Frobenius-Perron operators $\Bigl\{\mathit{P}_{\epsilon, t}^{(\mathcal{L}_j, \mathcal{L}_{\neg j}^{\ast})} \Bigr\}_{t \ge 0}$, with $(\mathcal{L}_j, \mathcal{L}_{\neg j}^{\ast}) \in \mathscr{L}$, which corresponds to the multi-channel system with a small random perturbation.\footnote{We remark that such a solution for Equation~\eqref{EQ44} is assumed to have continuous sample paths with probability one (see Kunita \cite{Kun90} for additional information).}
\begin{remark}
Note that, in general, the evolution of the density function is given by
\begin{align*} 
 \mathit{P}_{\epsilon, t}^{(\mathcal{L}_j^{\ast}, \mathcal{L}_{\neg j}^{\ast})}\vartheta(x) = \int_{X} \Gamma_{\epsilon, t}(x, y)\vartheta(y)\mu(dy), \quad \forall t \ge 0,
\end{align*}
where $\Gamma_{\epsilon, t}(\cdot,\cdot)$ is the kernel (i.e., the fundamental solution), which is independent of the initial density function $\vartheta(x) \in D(X, \mathscr{A}, \mu)$. Moreover, it is well known that the solution, which is associated with Cauchy problem, satisfies the Fokker-Planck (or Kolmogorov forward) equation that is completely specified, with some additional regularity conditions, by $\Bigl(A(t) +  \sum\nolimits_{j \in \mathcal{N}} B_j(t) \mathcal{L}_j^{\ast}(t)\Bigr)Z_{\epsilon}(t)$ and $\sqrt{\epsilon}\,\sigma(t, Z_{\epsilon}(t))$ (e.g., see also \cite{GikSko75} or \cite{Kun90}).
\end{remark}

In what follows, we provide additional results, based on the asymptotic stability of an invariant measure, that partly establish the resilient behavior for the set of equilibrium feedback operators with respect to the random perturbation in the system. 

\begin{proposition}\label{PR3}
For any continuous density function $\vartheta(x) \in D(X, \mathscr{A}, \mu)$, suppose that
\begin{align}
 \sup_{(\mathcal{L}_j, \mathcal{L}_{\neg j}^{\ast}) \in \mathscr{L}}\,\Bigl \Vert \mathit{P}_{\epsilon, t}^{(\mathcal{L}_j, \mathcal{L}_{\neg j}^{\ast})} \vartheta(x) - \mathit{P}_t^{(\mathcal{L}_j^{\ast}, \mathcal{L}_{\neg j}^{\ast})} \vartheta(x) \Bigr\Vert_{L^1(X, \mathscr{A}, \mu)}, ~~ \forall t \ge 0, \forall j \in \mathcal{N}, \label{EQ45} 
\end{align}
tends to zero in a weak* topology on $X$ as $\epsilon \rightarrow 0$. Then, the weak limit of invariant measure $\mu_{\ast}^{\epsilon}$ of $\Big\{\mathit{P}_{\epsilon,t}^{(\mathcal{L}_j^{\ast}, \mathcal{L}_{\neg j}^{\ast})} \Bigr\}_{t \ge 0}$ is absolutely continuous with respect to the invariant measure $\mu_{\ast}$, where the latter corresponds to the family of Frobenius-Perron operators $\Bigl\{\mathit{P}_t^{(\mathcal{L}_j^{\ast}, \mathcal{L}_{\neg j}^{\ast})} \Bigr\}_{t \ge 0}$.
\end{proposition}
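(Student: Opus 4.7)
The plan is to show that every weak* accumulation point $\bar{\mu}$ of the family of invariant measures $\{\mu_{\ast}^{\epsilon}\}_{\epsilon > 0}$ must itself be invariant under the unperturbed transformations $\Phi_t^{(\mathcal{L}_j^{\ast}, \mathcal{L}_{\neg j}^{\ast})}$, and then to leverage the uniqueness of the common fixed-point density $\vartheta_{\ast}(x)$ established in Proposition~\ref{PR2} to force $\bar{\mu}$ to be absolutely continuous with respect to $\mu_{\ast}$.

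First, using the regularizing effect of the nondegenerate diffusion in Equation~\eqref{EQ44}, I would assume that for each $\epsilon > 0$ the invariant measure $\mu_{\ast}^{\epsilon}$ admits a density $\vartheta_{\ast}^{\epsilon}(x) \in D(X, \mathscr{A}, \mu)$, so that the invariance condition reads $\mathit{P}_{\epsilon, t}^{(\mathcal{L}_j^{\ast}, \mathcal{L}_{\neg j}^{\ast})} \vartheta_{\ast}^{\epsilon}(x) = \vartheta_{\ast}^{\epsilon}(x)$ for every $t \ge 0$ and every $j \in \mathcal{N}$. From the statement's premise, I would extract a weak* convergent subsequence $\mu_{\ast}^{\epsilon_n} \to \bar{\mu}$ as $\epsilon_n \downarrow 0$.

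Second, I would combine this invariance with the hypothesis in Equation~\eqref{EQ45} to obtain
\[ \bigl\lVert \vartheta_{\ast}^{\epsilon}(x) - \mathit{P}_t^{(\mathcal{L}_j^{\ast}, \mathcal{L}_{\neg j}^{\ast})} \vartheta_{\ast}^{\epsilon}(x)\bigr\rVert_{L^1(X, \mathscr{A}, \mu)} \to 0 \text{ in the weak* sense as } \epsilon \to 0. \]
Pairing this with an arbitrary test function $\zeta(x) \in L^{\infty}(X, \mathscr{A}, \mu)$ and invoking the Koopman adjoint relation of Equation~\eqref{EQ6}, I would pass $\epsilon_n \to 0$ in
\[ \int_{X} \zeta(x) \vartheta_{\ast}^{\epsilon_n}(x)\, \mu(dx) = \int_{X} \zeta(x)\, \mathit{P}_t^{(\mathcal{L}_j^{\ast}, \mathcal{L}_{\neg j}^{\ast})} \vartheta_{\ast}^{\epsilon_n}(x)\, \mu(dx) + o(1) \]
to conclude that $\bar{\mu}$ is invariant under each $\Phi_t^{(\mathcal{L}_j^{\ast}, \mathcal{L}_{\neg j}^{\ast})}$, $t \ge 0$.

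Third, writing the Lebesgue decomposition $\bar{\mu} = \bar{\mu}_{\rm ac} + \bar{\mu}_{\rm s}$ relative to $\mu_{\ast}$, invariance transfers to each piece, so it suffices to rule out the singular part. The contraction in Equation~\eqref{EQ38}, together with the asymptotic stability $\mathit{P}_t^{(\mathcal{L}_j^{\ast}, \mathcal{L}_{\neg j}^{\ast})}\vartheta(x) \to \vartheta_{\ast}(x)$ in $L^1(X, \mathscr{A}, \mu)$ (the mechanism behind Corollary~\ref{CR4}), forces any invariant mass to be concentrated on the support of $\vartheta_{\ast}(x)$, so $\bar{\mu}_{\rm s} = 0$, and the density of $\bar{\mu}_{\rm ac}$ must coincide with $\vartheta_{\ast}(x)$ (up to a nonnegative scalar) by the uniqueness of the common fixed point in Proposition~\ref{PR2}. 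Hence $\bar{\mu} \ll \mu_{\ast}$.

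The hard part will be passing from the hypothesis in Equation~\eqref{EQ45}, which is posited for a single fixed continuous density $\vartheta(x)$, to the $\epsilon$-dependent family $\{\vartheta_{\ast}^{\epsilon}\}_{\epsilon > 0}$. One route is to use the uniform continuity of the semigroup (Equation~\eqref{EQ33}) together with an $L^1$-approximation of $\vartheta_{\ast}^{\epsilon}$ by a fixed continuous density and to exploit tightness/equicontinuity of the family $\{\vartheta_{\ast}^{\epsilon}\}_{\epsilon > 0}$ (typically guaranteed when $\sigma(t, x)$ is nondegenerate and the drift admits a suitable Lyapunov function). A secondary subtlety is ruling out a nontrivial singular component of $\bar{\mu}$ using only the $L^1$-contraction, which may require additional ergodic-type regularity of the unperturbed family $\bigl\{\mathit{P}_t^{(\mathcal{L}_j^{\ast}, \mathcal{L}_{\neg j}^{\ast})}\bigr\}_{t\ge 0}$.
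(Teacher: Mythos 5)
Your route is genuinely different from the paper's. The paper argues directly: it asserts (from ``standard perturbation arguments'') that $\sup_{x}\int_X \bigl\vert \mathit{P}_{\epsilon,t}^{(\mathcal{L}_j^{\ast},\mathcal{L}_{\neg j}^{\ast})}\vartheta - \mathit{P}_t^{(\mathcal{L}_j^{\ast},\mathcal{L}_{\neg j}^{\ast})}\vartheta\bigr\vert\,\mu(dx)\to 0$ for every density, infers from this that $\bigl\lVert\vartheta_{\ast}^{\epsilon}-\vartheta_{\ast}\bigr\rVert_{L^1(X,\mathscr{A},\mu)}\to 0$, and concludes that $\mu_{\ast}^{\epsilon}\to\mu_{\ast}$ weakly, so the weak limit is $\mu_{\ast}$ itself and absolute continuity is immediate. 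You instead take the standard stochastic-stability route (extract a weak* accumulation point $\bar{\mu}$, prove it is invariant for the unperturbed semigroup via the adjoint relation \eqref{EQ6}, then invoke uniqueness of the stationary density plus a Lebesgue decomposition). Your approach is more careful on the compactness side but, as you note, must separately exclude a singular invariant component of $\bar{\mu}$, something the density-level dynamics alone cannot detect (a singular invariant measure, e.g.\ an atom at a fixed point of $\Phi_t^{(\mathcal{L}_j^{\ast},\mathcal{L}_{\neg j}^{\ast})}$, is invisible to $\mathit{P}_t$ acting on $L^1$); the paper's route, if its convergence claim held, would yield the stronger conclusion $\bar{\mu}=\mu_{\ast}$ outright. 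Both arguments hinge on the same step you flag as the hard part: the hypothesis \eqref{EQ45} is posited for a fixed continuous density $\vartheta(x)$, while the conclusion requires control along the $\epsilon$-dependent family $\bigl\{\vartheta_{\ast}^{\epsilon}\bigr\}$; the paper simply asserts the resulting interchange of limits without the tightness or equicontinuity argument you correctly identify as missing. In that sense your sketch is more explicit about where the real work lies, though neither version closes that gap.
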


\begin{proof}
Note that, from the standard perturbation arguments for linear operators, if the set of equilibrium feedback operators $\bigl(\mathcal{L}_1^{\ast}, \mathcal{L}_2^{\ast}, \ldots, \mathcal{L}_N^{\ast} \bigr) \in \mathscr{L}$ and the fixed-point density function $\vartheta_{\ast}(x) \in D(X, \mathscr{A}, \mu)$ (i.e., $\mathit{P}_t^{(\mathcal{L}_j^{\ast}, \mathcal{L}_{\neg j}^{\ast})}\vartheta_{\ast}(x)=\vartheta_{\ast}(x)$, $\forall t \ge 0$) satisfy Proposition~\ref{PR2}. Then, the following holds
\begin{align*}
 \lim_{\epsilon \rightarrow 0} \left(\sup_{x \in X} \,\int_{X} \left \vert \mathit{P}_{\epsilon, t}^{(\mathcal{L}_j^{\ast}, \mathcal{L}_{\neg j}^{\ast})} \vartheta(x) - \mathit{P}_t^{(\mathcal{L}_j^{\ast}, \mathcal{L}_{\neg j}^{\ast})} \vartheta(x)\right\vert \mu(dx) \right)=0, ~~ \forall \vartheta(x) \in D(X, \mathscr{A}, \mu),
\end{align*}
for any fixed $t \ge 0$. This further implies the following
\begin{align*}
 \lim_{\epsilon \rightarrow 0} \,\Bigl \Vert \vartheta_{\ast}^{\epsilon}(x) - \vartheta_{\ast}(x) \Bigr\Vert_{L^1(X, \mathscr{A}, \mu)} = 0,
\end{align*}
where $\vartheta_{\ast}^{\epsilon}(x)$ is invariant of $\mathit{P}_{\epsilon, t}^{(\mathcal{L}_j^{\ast}, \mathcal{L}_{\neg j}^{\ast})}$ for each fixed $t \ge 0$.

In order for $\mu_{\ast}^{\epsilon}$ to be absolutely continuous with respect to $\mu_{\ast}$, i.e., $\mu_{\ast}^{\epsilon} \ll \mu_{\ast}$ and $\mu_{\ast}^{\epsilon} (A) = \int_{A} \vartheta_{\ast}^{\epsilon}(x) \mu_{\ast}^{\epsilon}(dx)$, $\forall A \in \mathscr{A}$, it is suffice to show that, for any fixed $t \ge 0$, the family of Frobenius-Perron operators $\mathit{P}_{\epsilon, t}^{(\mathcal{L}_j, \mathcal{L}_{\neg j}^{\ast})}$, with respect to $(\mathcal{L}_j, \mathcal{L}_{\neg j}^{\ast}) \in \mathscr{L}$ for all $j \in \mathcal{N}$, should not be too different from $\mathit{P}_t^{(\mathcal{L}_j^{\ast}, \mathcal{L}_{\neg j}^{\ast})}$ for small $\epsilon \ge 0$ (cf. Remark~\ref{RM8} below). 

On the other hand, under the game-theoretic framework (cf. Proposition~\ref{PR2}), each of these feedback operators are required to respond in some sense of best-response correspondence to the others feedback strategies in the system. As a result of this, the following will hold true
\begin{align*}
 \sup_{(\mathcal{L}_j, \mathcal{L}_{\neg j}^{\ast}) \in \mathscr{L}}\,\Bigl \Vert \mathit{P}_{\epsilon, t}^{(\mathcal{L}_j, \mathcal{L}_{\neg j}^{\ast})} \vartheta_{\ast}(x) -  \underbrace{\mathit{P}_t^{(\mathcal{L}_j^{\ast}, \mathcal{L}_{\neg j}^{\ast})}\vartheta_{\ast}(x)}_{=\vartheta_{\ast}(x), ~\forall t \ge 0} \Bigr\Vert_{L^1(X, \mathscr{A}, \mu)} \rightarrow 0 \quad  \text{as} \quad \epsilon \rightarrow 0,
\end{align*}
for each $j \in \mathcal{N}$, when only the set of feedback operators attains a robust/stable (game-theoretic) equilibrium solution $(\mathcal{L}_j^{\ast}, \mathcal{L}_{\neg j}^{\ast}) \in \mathscr{L}$. Note that, in the above equation, the supremum is computed with respect to $\mathcal{L}_j$ with $(\mathcal{L}_j, \mathcal{L}_{\neg j}^{\ast}) \in \mathscr{L}$ for each $j \in \mathcal{N}$, while others $\mathcal{L}_{\neg j}^{\ast}$ remain fixed, and when there is also a small random perturbation in the system.

Then, we see that $\mu_{\ast}^{\epsilon}$ tends to $\mu_{\ast}$ weakly as $\epsilon \rightarrow 0$. This completes the proof. \qed
\end{proof}

\begin{remark}\label{RM8}
We remark that, in general, the relation between $\mathit{P}_{\epsilon, t}^{(\mathcal{L}_j^{\ast}, \mathcal{L}_{\neg j}^{\ast})}$ and $\mathit{P}_t^{(\mathcal{L}_j^{\ast}, \mathcal{L}_{\neg j}^{\ast})}$ depends on the family of transformations $\Bigl\{\Phi_t^{(\mathcal{L}_j^{\ast}, \mathcal{L}_{\neg j}^{\ast})} \Bigr\}_{t \ge 0}$ (with respect to the set of equilibrium feedback operators $(\mathcal{L}_j^{\ast}, \mathcal{L}_{\neg j}^{\ast}) \in \mathscr{L}$) as well as on the measure space $L^1(X, \mathscr{A}, \mu)$ (see also \cite{BalYo93} and \cite{Kel82}).\end{remark}

We conclude this subsection with the following corollary, which is concerned with the resilient behavior of the set of equilibrium feedback operators, when there is a small random perturbation in the system. The proof follows similar arguments as in the proofs of Proposition~\ref{PR3} and Corollary~\ref{CR4}, and therefore will be omitted.
\begin{corollary}\label{CR5}
For $\epsilon > 0$ and $\operatorname{supp}\vartheta_{\ast}^{\epsilon}(x) \subset \operatorname{supp}\vartheta_{\ast}(x)$, if the relative entropy of the multi-channel system, with a random perturbation term, satisfies the following condition
\begin{align}
 \mathcal{H}_{\rm r}\Bigr(\mathit{P}_{\epsilon, t}^{(\mathcal{L}_j^{\ast}, \mathcal{L}_{\neg j}^{\ast})}\vartheta(x)\,\bigl\lvert\,\mathit{P}_t^{(\mathcal{L}_j^{\ast}, \mathcal{L}_{\neg j}^{\ast})}\vartheta(x)\Bigl)\le \theta_{\epsilon}^{(\mathcal{L}_j^{\ast}, \mathcal{L}_{\neg j}^{\ast})}, ~~ \forall t \ge 0, ~ \forall \vartheta(x) \in D(X, \mathscr{A}, \mu), \label{EQ46} 
\end{align}
where $\theta_{\epsilon}^{(\mathcal{L}_j^{\ast}, \mathcal{L}_{\neg j}^{\ast})}$ is a small positive number that depends on $\epsilon$ (and also tends to zero as $\epsilon \rightarrow 0$). Then, the set of equilibrium feedback operators $\bigl(\mathcal{L}_1^{\ast}, \mathcal{L}_2^{\ast}, \ldots, \mathcal{L}_N^{\ast} \bigr) \in \mathscr{L}$ exhibits a resilient behavior.
\end{corollary}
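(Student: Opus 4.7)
The plan is to combine the relative-entropy bound in \eqref{EQ46} with the convergence result of Proposition~\ref{PR3} and the asymptotic convergence established in Corollary~\ref{CR4}, and then translate the resulting $L^1$ closeness into the stability of the invariant measure $\mu_{\ast}^{\epsilon}$ as $\epsilon \rightarrow 0$. In more detail, I would first invoke a Pinsker-type inequality (i.e., a bound of the form $\lVert \xi - \vartheta \rVert_{L^1}^{2} \le 2\, \mathcal{H}_{\rm r}(\xi \,\vert\, \vartheta)$ valid whenever $\operatorname{supp}\xi \subset \operatorname{supp}\vartheta$) applied to the hypothesis \eqref{EQ46}. This converts the entropy bound into
\begin{align*}
 \bigl\lVert \mathit{P}_{\epsilon, t}^{(\mathcal{L}_j^{\ast}, \mathcal{L}_{\neg j}^{\ast})}\vartheta(x) - \mathit{P}_t^{(\mathcal{L}_j^{\ast}, \mathcal{L}_{\neg j}^{\ast})}\vartheta(x)\bigr\rVert_{L^1(X, \mathscr{A}, \mu)} \le \sqrt{2\,\theta_{\epsilon}^{(\mathcal{L}_j^{\ast}, \mathcal{L}_{\neg j}^{\ast})}},
\end{align*}
uniformly in $t \ge 0$ and $\vartheta(x) \in D(X, \mathscr{A}, \mu)$, which is precisely the type of hypothesis called for in \eqref{EQ45}.

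Next, since the right-hand side tends to $0$ as $\epsilon \rightarrow 0$, Proposition~\ref{PR3} applies and gives the weak convergence of $\mu_{\ast}^{\epsilon}$ to $\mu_{\ast}$ together with $\mu_{\ast}^{\epsilon} \ll \mu_{\ast}$. Writing this in terms of the densities $\vartheta_{\ast}^{\epsilon}(x)$ of $\mathit{P}_{\epsilon, t}^{(\mathcal{L}_j^{\ast}, \mathcal{L}_{\neg j}^{\ast})}$ yields $\lim_{\epsilon \rightarrow 0}\lVert \vartheta_{\ast}^{\epsilon}(x) - \vartheta_{\ast}(x)\rVert_{L^{1}(X, \mathscr{A}, \mu)} = 0$. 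I would then invoke Corollary~\ref{CR4} to assert that, along the unperturbed family, $\mathit{P}_t^{(\mathcal{L}_j^{\ast}, \mathcal{L}_{\neg j}^{\ast})}\vartheta(x) \rightarrow \vartheta_{\ast}(x)$ in the sense of vanishing relative entropy, and use the triangle-type inequality
\begin{align*}
 \mathcal{H}_{\rm r}\Bigl(\mathit{P}_{\epsilon, t}^{(\mathcal{L}_j^{\ast}, \mathcal{L}_{\neg j}^{\ast})}\vartheta(x)\,\bigl\lvert\,\vartheta_{\ast}^{\epsilon}(x) \Bigr) \le \theta_{\epsilon}^{(\mathcal{L}_j^{\ast}, \mathcal{L}_{\neg j}^{\ast})} + \mathcal{H}_{\rm r}\Bigl(\mathit{P}_t^{(\mathcal{L}_j^{\ast}, \mathcal{L}_{\neg j}^{\ast})}\vartheta(x)\,\bigl\lvert\,\vartheta_{\ast}(x) \Bigr) + \delta_{\epsilon},
\end{align*}
where $\delta_{\epsilon} \rightarrow 0$ accounts for the replacement of $\vartheta_{\ast}(x)$ by $\vartheta_{\ast}^{\epsilon}(x)$ (justified by the $L^{1}$-closeness above and the hypothesis on supports). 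Passing first to $t \rightarrow \infty$ and then to $\epsilon \rightarrow 0$ shows that the perturbed evolution relaxes to its own equilibrium state $\vartheta_{\ast}^{\epsilon}(x)$, which itself lies arbitrarily close to $\vartheta_{\ast}(x)$. This is precisely the resilient behavior of the equilibrium set $(\mathcal{L}_1^{\ast}, \mathcal{L}_2^{\ast}, \ldots, \mathcal{L}_N^{\ast})$.

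The main obstacle, in my view, will be the step bridging the relative-entropy inequality \eqref{EQ46} and the weak$^{\ast}$ convergence required by Proposition~\ref{PR3}: the hypothesis $\operatorname{supp}\vartheta_{\ast}^{\epsilon}(x) \subset \operatorname{supp}\vartheta_{\ast}(x)$ is indispensable there, since otherwise the relative entropy can be infinite and Pinsker fails. The remaining steps are then essentially a concatenation of Proposition~\ref{PR3} and Corollary~\ref{CR4}, which is why the authors indicate that the proof is omitted; however, care must be taken that the best-response property of $(\mathcal{L}_j^{\ast}, \mathcal{L}_{\neg j}^{\ast})$ used inside Proposition~\ref{PR3} (namely, that $\mathit{P}_{\epsilon, t}^{(\mathcal{L}_j, \mathcal{L}_{\neg j}^{\ast})}$ stays close to $\mathit{P}_t^{(\mathcal{L}_j^{\ast}, \mathcal{L}_{\neg j}^{\ast})}$ uniformly over $\mathcal{L}_j$) transfers to the perturbed setting under the entropy bound \eqref{EQ46}, which is where the hypothesis on $\theta_{\epsilon}^{(\mathcal{L}_j^{\ast}, \mathcal{L}_{\neg j}^{\ast})}$ does the real work.
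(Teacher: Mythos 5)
The paper does not actually supply a proof of Corollary~\ref{CR5}: it only remarks that the argument ``follows similar arguments as in the proofs of Proposition~\ref{PR3} and Corollary~\ref{CR4}'' and omits it. Your proposal therefore fills a genuine void, and its overall architecture --- use \eqref{EQ46} to recover the $L^1$-closeness hypothesis \eqref{EQ45} of Proposition~\ref{PR3}, then concatenate with Corollary~\ref{CR4} --- is exactly the route the authors gesture at. The Csisz\'{a}r--Kullback--Pinsker step is the most valuable thing in your write-up: it is the only place (in your argument or in the paper) where the entropy bound \eqref{EQ46} is actually converted into something Proposition~\ref{PR3} can consume, and it is correct under the stated support hypothesis.

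Two steps, however, would fail as written. First, the ``triangle-type inequality'' $\mathcal{H}_{\rm r}\bigl(\mathit{P}_{\epsilon, t}^{(\mathcal{L}_j^{\ast}, \mathcal{L}_{\neg j}^{\ast})}\vartheta \,\bigl\lvert\, \vartheta_{\ast}^{\epsilon}\bigr) \le \theta_{\epsilon}^{(\mathcal{L}_j^{\ast}, \mathcal{L}_{\neg j}^{\ast})} + \mathcal{H}_{\rm r}\bigl(\mathit{P}_t^{(\mathcal{L}_j^{\ast}, \mathcal{L}_{\neg j}^{\ast})}\vartheta \,\bigl\lvert\, \vartheta_{\ast}\bigr) + \delta_{\epsilon}$ has no justification: relative entropy is not a metric and satisfies no triangle inequality, and perturbing either argument of $\mathcal{H}_{\rm r}(\cdot\,\vert\,\cdot)$ in $L^1$ does not control the change in $\mathcal{H}_{\rm r}$, since the integrand involves the logarithm of a ratio that can blow up even when the $L^1$ distance is small. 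To make this step honest you would need either a uniform positive lower bound on $\vartheta_{\ast}$ and $\vartheta_{\ast}^{\epsilon}$ on their common support, or to carry out the whole concatenation at the level of $L^1$ norms (where the triangle inequality is available after Pinsker) and only reinterpret the conclusion in entropy terms at the end. Second, hypothesis \eqref{EQ45} of Proposition~\ref{PR3} is a supremum over all deviations $(\mathcal{L}_j, \mathcal{L}_{\neg j}^{\ast}) \in \mathscr{L}$, whereas \eqref{EQ46} bounds the relative entropy only along the equilibrium pair $(\mathcal{L}_j^{\ast}, \mathcal{L}_{\neg j}^{\ast})$; Pinsker therefore yields the $L^1$ bound only for that pair, not the uniform-in-$\mathcal{L}_j$ bound that Proposition~\ref{PR3} formally requires. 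You flag this mismatch in your closing paragraph but do not close it, so as stated the invocation of Proposition~\ref{PR3} is not licensed by \eqref{EQ46}. (In fairness, the paper's own proof of Proposition~\ref{PR3} is heuristic on precisely this point, so your reconstruction is no less rigorous than the source it is meant to mirror.)
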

The above corollary states that the set of equilibrium feedback operators exhibits a resilient behavior, when the contribution of the perturbation term, to move away the system from the invariant measure $\mu_{\ast}$, is bounded from above for all $t \ge 0$.

We also note that the following holds true (see Equation~\eqref{EQ45})
\begin{align}
\lim_{t \rightarrow \infty} \, \Bigl \Vert \mathit{P}_{\epsilon, t}^{(\mathcal{L}_j^{\ast}, \mathcal{L}_{\neg j}^{\ast})} \vartheta(x) - \vartheta_{\ast}(x) \Bigr \Vert_{L^1(X, \mathscr{A}, \mu)} \rightarrow 0 \quad  \text{as} \quad \epsilon \rightarrow 0, \label{EQ47} 
\end{align}
for any $\vartheta(x) \in D(X, \mathscr{A}, \mu)$. Therefore, such a bound in Equation~\eqref{EQ46} is an immediate consequence of this fact.\footnote{For small $\epsilon \ge 0$, notice that 
\begin{align*}
 \lim_{t \rightarrow \infty} \int_{X} \mathit{P}_{\epsilon, t}^{(\mathcal{L}_j^{\ast}, \mathcal{L}_{\neg j}^{\ast})} \vartheta(x) \mu(dx) = \int_{X} \vartheta_{\ast}^{\epsilon}(x)\mu(dx), \quad \forall \vartheta(x) \in D(X, \mathscr{A}, \mu),
\end{align*}
when the stochastic semigroup $\mathit{P}_{\epsilon, t}^{(\mathcal{L}_j^{\ast}, \mathcal{L}_{\neg j}^{\ast})}$ is asymptotically stable for each fixed $t \ge 0$ (cf. \cite[Sec.~11.9]{LasMac94}).}

\begin{remark}
Finally, we note that although we have not discussed the limiting behavior, as $\epsilon \rightarrow 0$, of the family of measures $\bigl\{\mu_{\ast}^{\epsilon}\bigr\}$ on the space $L^1(X, \mathscr{A}, \mu)$. It appears that the {\em theory of large deviations} can be used to estimate explicitly the rate at which this family of measures converges to the limit measure $\mu_{\ast}$, where the latter is invariant with respect to $\Phi_t^{(\mathcal{L}_j^{\ast}, \mathcal{L}_{\neg j}^{\ast})}$ for each fixed $t \ge 0$ (e.g., see \cite{Tou09}, \cite{Eli95} or \cite{DemZe98} for a detailed exposition of this theory).
\end{remark}

\end{document}